\numberwithin{equation}{section}
\newtheorem{theorem}{Theorem}[section]
\newtheorem{thm}{Theorem}[section]
\newtheorem{lem}{Lemma}[section]
\newtheorem{lemma}{Lemma}[section]
\newtheorem{alg}[thm]{Algorithm}
\newtheorem{remark}[thm]{Remark}
\newcommand{\ee}{\end{equation}}
\newcommand{\bea}{\begin{eqnarray}}
\newcommand{\eea}{\end{eqnarray}}
\newcommand{\beas}{\begin{eqnarray*}}
	\newcommand{\eeas}{\end{eqnarray*}}
\newcommand{\vertiii}[1]{{\left\vert\kern-0.25ex\left\vert\kern-0.25ex\left\vert #1
		\right\vert\kern-0.25ex\right\vert\kern-0.25ex\right\vert}}
\newcommand{\normiii}[1]{{\left\vert\kern-0.25ex\left\vert\kern-0.25ex\left\vert #1
		\right\vert\kern-0.25ex\right\vert\kern-0.25ex\right\vert}}
\newcommand{\bu}{u}
\newcommand{\bv}{ v}
\newcommand{\be}{ e}
\newcommand{\bV}{ V}
\begin{document}
\title{Enhancing nonlinear solvers for the Navier-Stokes equations with continuous (noisy) data assimilation}

\author{
Bosco Garc{\' i}a-Archilla \thanks{\small Departamento de Matem\'atica Aplicada
II, Universidad de Sevilla, Sevilla, Spain. Research is supported by grants
 PID2021-123200NB-I00 and PID2022-136550NB-I00  funded by MCIN/AEI/
10.13039/501100011033 and by ERDF A way of making Europe, by
the European Union.  (bosco@esi.us.es)}
\and
Xuejian Li\thanks{\small School of Mathematical and Statistical Sciences, Clemson University, Clemson, SC, 29364 (xuejial@clemson.edu)}  
\and
Julia Novo \thanks{\small Departamento de
Matem\'aticas, Universidad Aut\'onoma de Madrid, Spain. Research is supported
by grant PID2022-136550NB-I00  funded by MCIN/AEI/
10.13039/501100011033 and by ERDF A way of making Europe, by
the European Union. (julia.novo@uam.es)}
\and
Leo G. Rebholz\thanks{\small School of Mathematical and Statistical Sciences, Clemson University, Clemson, SC, 29364.  Research,
is partially supported by NSF grant DMS 2152623  (rebholz@clemson.edu)}
		}

	\maketitle
	
	\begin{abstract}{
		We consider nonlinear solvers for the incompressible, steady (or at a fixed time step for unsteady) Navier-Stokes equations in the setting where  partial measurement data of the solution is available.  The measurement data is incorporated/assimilated into the solution through a nudging term addition to the the Picard iteration that penalized the difference between the coarse mesh interpolants of the true solution and solver solution, analogous to how continuous data assimilation (CDA) is implemented for time dependent PDEs.  This was considered in the paper [Li et al. {\it CMAME} 2023], and we extend the methodology by improving the analysis to be in the $L^2$ norm instead of a weighted $H^1$ norm where the weight depended on the coarse mesh width, and to the case of noisy measurement data.  For noisy measurement data, we prove that the CDA-Picard method is stable and convergent, up to the size of the noise.  Numerical tests illustrate the results, and show that a very good strategy when using noisy data is to use CDA-Picard to generate an initial guess for the classical Newton iteration.
		}
\end{abstract}

	\section{Introduction}
	We consider herein nonlinear solvers for the Navier-Stokes equations (NSE), which are widely used for modeling incompressible fluid flow.  In particular, we study the steady state NSE, which are given on a domain $\Omega \subset \mathbb{R}^d, d=2,3$ by
	\begin{equation}\label{NS}
		\left\{\begin{aligned}
			-\nu \Delta u+u\cdot\nabla u+ \nabla p&={f} \quad \text{in}~\Omega,\\
			\nabla\cdot {u}&=0\quad \text{in}~\Omega,\\
			{u}&=0 \quad \text{on}~~\partial\Omega,
		\end{aligned}\right.
	\end{equation}
	where $u$ is the fluid velocity, $p$ is the pressure, $\nu$ is the kinematic viscosity,  and ${f}$ is an external forcing term. The Reynolds number $Re:=\frac{1}{\nu}$ is a parameter that describes how complex a flow is; higher $Re$ is associated with more complex physics and non-unique steady solutions.
	While our study is restricted to the steady system \eqref{NS} with homogenous Dirichlet boundary conditions, the results are extendable to solving the time dependent NSE at a fixed time step in a time stepping scheme as well as nonhomogeneous mixed Dirichlet/Neumann boundary conditions. 
	
	Probably the most common nonlinear iteration for solving \eqref{NS}  is  the Picard iteration, which is given by (suppressing the boundary conditions)
	\begin{align*}
		-\nu \Delta u_{k+1}+u_k\cdot\nabla u_{k+1}+ \nabla p_{k+1}&={f}, \\
		\nabla\cdot {u}_{k+1}&=0.
	\end{align*}
	The Picard iteration is globally convergent under a smallness condition on the Reynolds number / problem parameters $\alpha:=M\nu^{-2} \|f \|_{H^{-1}}<1$, where $M$ is a domain-size dependent constant arising from Sobolev/Ladyzhenskaya inequalities \cite{Laytonbook,GR86,PRX19}.  Provided $\alpha<1$, the constant $\alpha$ is an upper bound on Picard's $H^1$ linear convergence rate \cite{GR86} and is believed reasonably sharp.  Moreover, $\alpha<1$ also implies that steady NSE solutions are unique.
	
	In \cite{LHRV23}, the CDA-Picard iteration  was proposed, in order to improve convergence in the setting where solution data is available, e.g. from measurements or observed data.  The CDA-Picard algorithm takes the form
	\begin{align*}
		-\nu \Delta u_{k+1}+u_k\cdot\nabla u_{k+1}+ \nabla p_{k+1} + \mu I_H(u_{k+1} - u)&={f}, \\
		\nabla\cdot {u}_{k+1}&=0,
	\end{align*}
where $\mu>0$ is a (user-defined) nudging parameter, and $I_H$ is an appropriate coarse mesh interpolant (defined precisely in section 2).  We note as is typical in CDA, the true solution $u$ remains unknown, but $I_H(u)$ is known from given or observed solution data.   The general idea is that, if one knows the solution at some points in space, then one should be able to improve convergence and robustness of a solver.  Indeed, in \cite{LHRV23}, it is proven analytically and illustrated numerically how CDA can accelerate convergence and enlarge the convergence basin for Picard.  
	
The use of CDA above is based on the Azouani, Olson, and Titi algorithm from 2014 \cite{AOT14}, and as noted in \cite{LHRV23} 
its intent is for time dependent problems.  It has been used on a wide variety of problems including Navier-Stokes equations and turbulence \cite{AOT14,DMB20,FLT19}, Benard convection \cite{FJT15}, a planetary geostrophic model \cite{FLT16}, the Cahn-Hilliard equation \cite{DR22}, and many others.  CDA has gained immense interest since its development and recent research includes sensitivity analyses \cite{CL21}, numerical analyses \cite{IMT20,LRZ19,RZ21,DR22,GNT18,JP23}, efficient nudging methods \cite{RZ21}, effects of noisy data \cite{Foias_Mondaini_Titi_2016}, and using it for parameter recovery \cite{CHL20}.  To our knowledge, the CDA-Picard method from \cite{LHRV23} is the first time CDA was used to improve efficiency and robustness of nonlinear solvers for steady PDEs.

The purpose of this paper is to extend the study of CDA-Picard in two ways.  First, we provide an improved convergence analysis for CDA-Picard based on the $L^2$ norm, instead of the weighted $H^1$ norm that depends on the coarse mesh width $H$ used in the original analysis from \cite{LHRV23}.  Second, we extend the study of CDA-Picard to the more realistic case of noisy data, both analytically and numerically.  We prove that for the case of noisy data, CDA-Picard is stable and convergent provided enough measurement data is known.  However, the convergence is only for the $L^2$ residual (which is the same as the difference in successive iterates if one considers CDA-Picard as a fixed point iteration, i.e. $\| u_{k+1} - u_k \|_{L^2} = \| g_{picard}(u_k) - u_k \|_{L^2}$), as the $L^2$ error $\| u_{nse} - u_k \|$ is found to scale with the size of the noise in the data.  Our analysis holds for any Reynolds number and forcing, i.e. it is valid in the case of non-unique steady NSE solutions, but the amount of required measurement data increases as $Re$ and other problem parameters increase.  However, we note that proofs for larger $Re$ require different techniques and not surprisingly have slightly worse scalings.  In addition to the analysis, we give results of numerical tests on two challenging test problems which illustrate the analytical results.  Finally, we show numerically that a very effective solution method when given noisy data is to run CDA-Picard until its residual is sufficiently small (so that its error is as small as possible, which is on the order of the noise level), and then use its final iterate as the initial guess to usual Newton; we call this method CDA-Picard + Newton.

The paper is organized as follows.  Section 2 gives preliminaries, sets notation for the paper, and gives background on CDA-Picard.  Section 3 gives the improved analysis for CDA-Picard using the $L^2$ norm instead of the weighted $H^1$ norm, and section 4 analyzes CDA-Picard for the case of noisy data.  Here we analyze stability, residual convergence, and error, both for smaller $Re$ and larger $Re$.  Section 5 contains numerical results which illustrate the theory and also test CDA-Picard + Newton.  Finally, conclusions are drawn in Section 6.

	\section{Preliminaries}\label{Pre}
	
	The domain $\Omega$ will be an open connected set in $\mathbb{R}^d$ (d=2 or 3) that is a convex polygon or has smooth boundary.  We use $(\cdot,\cdot)$ to denote the $L^2$ inner product that induces the $L^2$ norm $\|\cdot\|$, and all other norms will have appropriate subscripts.
	
	We denote the natural NSE function spaces for the velocity and pressure by 
	\begin{align}
		&Q:=\{v\in {L}^2(\Omega): \int_{\Omega}vdx=0\},\\
		& X:=
		\{v\in H^1\left(\Omega\right): v=0~~\text{on}~ \partial\Omega\}.
	\end{align}
		The divergence free subspace is denoted by
		\[
		V:=
		\{v\in X:  (\nabla \cdot v,q)=0\ \forall q\in Q\}.
		\]
			
Recall the Poincare inequality holds on $X$: there exists a constant $C_P$ dependent only on the domain satisfying $\| \phi \| \le C_P \|\nabla \phi \|$ for all $\phi\in X$.

\subsection{Steady NSE preliminaries}
	
	The weak form of the NSE (\ref{NS}) is to find $(u,p)\in X\times Q$ such that 
	\begin{equation}\label{weakNavier}
		\left\{\begin{aligned}
				\nu \left(\nabla u, \nabla v\right)+b\left({u},{u},v\right)+(p,\nabla\cdot v)&=\left\langle{f},v\right\rangle~~\forall v\in X,\\
			(\nabla\cdot{u},q)&=0~~\forall q\in Q,
		\end{aligned}\right.
	\end{equation}
	where 
	\begin{align*}
		b(u,w,v)&= \left((u\cdot\nabla)w, v\right) ~~\forall u,w,v\in X.
	\end{align*}
Since the pair $(X,Q)$ satisfies the inf-sup condition, we can instead consider the equivalent system \cite{GR86}: Find $u\in V$ satisfying
	\begin{equation}\label{wd}
		\nu \left(\nabla u, \nabla v\right)+b\left({u},{u},v\right)=\left\langle{f},v\right\rangle~~\forall v\in V.
	\end{equation}

It is important to note that $b(u,v,v)=0$ for all $u,v\in X$.  The following inequalities hold for $b$ \cite{Laytonbook,temam}: there exists a constant $M$ dependent only on the domain $\Omega$ such that
	\begin{align}
		b(u,w,v)&\leq M\|u\|^{\frac{1}{2}}\|\nabla u\|^{\frac{1}{2}}\|\nabla w\|\|\nabla v\|, \label{bbound1} \\
b(u,w,v)&\leq M \|\nabla u\|\|\nabla w\|\|\nabla v\|.\label{bbound2}
	\end{align}

Although this paper considers analysis in $V$, in discretizations using $(X_h,Q_h)\subset (X,Q)$ but where $\nabla \cdot X_h \not\subset Q_h$, the analysis of this paper will not immediately transfer to such discretizations due to the trilinear term.  A fix for this is to instead use the classical skew-symmetric form of the trilinear term,
\[
b^*(u,v,w)=b(u,v,w)+\frac12 ((\nabla \cdot u)v,w)
\]
and also grad-div stabilization.  These modifications will allow all of our results to extend to the case of mixed finite elements whose velocities are only discretely divergence free.  All of our computations use divergence-free Scott-Vogelius elements where $\nabla \cdot X_h \subset Q_h$ does hold.

 Recall the classical  well-posedness result for equation (\ref{wd}) \cite{Laytonbook,temam}:
	
	\begin{lemma}
		Let $\alpha=M\nu^{-2}\|f\|_{-1}$. For any $f\in H^{-1}$ and $\nu>0$,	solutions to  (\ref{wd}) exist and satisfy
		\begin{align}\label{Pri}
			\|\nabla u\|\leq \nu^{-1}\|f\|_{-1}.
		\end{align}
		Furthermore, if $\alpha<1$, 
		the solution is unique.
	\end{lemma}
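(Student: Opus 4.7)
The plan is to handle the three assertions in turn: the a priori bound, existence via Galerkin approximation, and uniqueness under the smallness condition $\alpha<1$. The a priori bound is essentially immediate: test the weak equation (\ref{wd}) with $v=u$, use the skew-symmetry identity $b(u,u,u)=0$, and bound the right-hand side by $\|f\|_{-1}\|\nabla u\|$. Dividing by $\nu\|\nabla u\|$ yields $\|\nabla u\|\le \nu^{-1}\|f\|_{-1}$ directly.

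For existence I would use a standard Galerkin construction. Fix a basis $\{\phi_j\}_{j\ge1}$ of $V$ (for instance, the Stokes eigenfunctions) and let $V_n=\mathrm{span}\{\phi_1,\ldots,\phi_n\}$. Seek $u^n=\sum_{j=1}^n c_j\phi_j$ solving
\[
\nu(\nabla u^n,\nabla\phi_j)+b(u^n,u^n,\phi_j)=\langle f,\phi_j\rangle,\qquad j=1,\ldots,n.
\]
This is a finite-dimensional polynomial system $F_n(c)=0$. Testing the same equation with $u^n$ itself and using $b(u^n,u^n,u^n)=0$ shows $F_n(c)\cdot c\ge \nu\|\nabla u^n\|^2-\|f\|_{-1}\|\nabla u^n\|$, which is strictly positive once $\|\nabla u^n\|>\nu^{-1}\|f\|_{-1}$. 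A standard Brouwer-degree consequence (a continuous vector field pointing outward on the boundary of a ball has a zero inside) then produces a Galerkin solution $u^n$ obeying the same bound $\|\nabla u^n\|\le\nu^{-1}\|f\|_{-1}$.

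The uniform $H^1$-bound gives, along a subsequence, weak convergence $u^n\rightharpoonup u$ in $V$, and by the Rellich--Kondrachov compact embedding $H^1(\Omega)\hookrightarrow L^4(\Omega)$ (valid for $d=2,3$), strong convergence in $L^4$. I expect this passage to the limit to be the main obstacle: linear terms survive weak convergence trivially, but the trilinear form must be handled by writing
\[
b(u^n,u^n,\phi_j)-b(u,u,\phi_j)=b(u^n-u,u^n,\phi_j)+b(u,u^n-u,\phi_j),
\]
bounding each piece with Hölder's inequality using the strong $L^4$ convergence of $u^n-u$ against the $L^4$ boundedness of $u^n$ and the fact that $\phi_j\in H^1$. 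Once the identity holds against every basis element $\phi_j$, the density of $\bigcup_n V_n$ in $V$ extends it to all test functions, delivering a weak solution in $V$.

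For uniqueness under $\alpha<1$, let $u_1,u_2\in V$ both solve (\ref{wd}) and set $w=u_1-u_2\in V$. Subtracting the equations and rewriting the nonlinear difference as $b(u_1,u_1,v)-b(u_2,u_2,v)=b(w,u_1,v)+b(u_2,w,v)$, then testing with $v=w$ and using $b(u_2,w,w)=0$, gives
\[
\nu\|\nabla w\|^2=-b(w,u_1,w).
\]
Applying the inequality (\ref{bbound2}) in the form $|b(w,u_1,w)|\le M\|\nabla u_1\|\|\nabla w\|^2$ together with the a priori bound $\|\nabla u_1\|\le\nu^{-1}\|f\|_{-1}$ yields $\nu(1-\alpha)\|\nabla w\|^2\le 0$, forcing $w=0$.
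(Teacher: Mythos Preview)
The paper does not actually prove this lemma: it is stated as a classical result and attributed to \cite{Laytonbook,temam} (and implicitly \cite{GR86}), with no argument given. Your proposal is exactly the standard textbook proof found in those references---energy estimate for the a priori bound, Galerkin approximation plus a Brouwer-type topological argument for existence, Rellich--Kondrachov compactness to pass to the limit, and the subtraction argument with (\ref{bbound2}) for uniqueness---and it is correct.

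One small point worth tightening: in your limit passage for the second piece $b(u,u^n-u,\phi_j)$, the gradient sits on $u^n-u$, which converges only weakly in $L^2$, so a direct H\"older bound of the form $\|u\|_{L^4}\|\nabla(u^n-u)\|_{L^2}\|\phi_j\|_{L^4}$ does not go to zero. You need either to invoke the skew-symmetry $b(u,u^n-u,\phi_j)=-b(u,\phi_j,u^n-u)$ (using $\nabla\cdot u=0$) so that the strong $L^4$ convergence of $u^n-u$ applies, or to observe that $u\otimes\phi_j\in L^2$ is a fixed test function against which $\nabla(u^n-u)\rightharpoonup 0$ weakly. Either fix is routine, but your written description glosses over it.
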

	The restriction $\alpha< 1$ is usually referred to as the small data condition for steady NSE, but we refer to it herein as a smallness condition since we use the term `data' to mean observations or measurements (and not the PDE parameters).

We will assume throughout this work that NSE solutions we consider satisfy
$$
\|\nabla \bu\|_\infty =: K_1 < \infty,
$$
and also that $f$ is sufficiently regular to allow $K_1$ to be finite.

\subsection{Picard and Newton iterations}

Herein we will mostly study the Picard iteration and variations of it, but Newton will also be used in the numerical tests.  These iterations can be written in their $V$-formulations as follows: Find $u_{k+1}\in V$ satisfying for all $v\in V$ that
		\begin{align}
				\label{PicardO}a\left({u}_{k+1},v\right)+b\left({u}_{k},{u}_{k+1},v\right)&=\left\langle{f},v\right\rangle~~ \text{(Picard)},\\
				\label{NewtonO}a\left({u}_{k+1},v\right)+b\left({u}_{k},{u}_{k+1},v\right)+b\left({u}_{k+1},{u}_{k},v\right)& =\left\langle{f},v\right\rangle +b\left({u}_{k},{u}_{k},v\right)~~ \text{(Newton)}.
		\end{align}
	
	We recall some basic results about the Picard iteration (\ref{PicardO}) from \cite{GR86}.
	
		\begin{lemma}\label{PLemma}
		The Picard method (\ref{PicardO}) is unconditionally stable. Furthermore, if $\alpha<1$
		then the sequence $\{u_k\}$ generated by Picard converges to the NSE solution $u$ as $k\to \infty$ with an $\alpha$-linear rate.
	\end{lemma}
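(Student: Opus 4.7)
The plan is to prove stability and convergence using the standard energy-type arguments adapted to the Picard linearization, leveraging the skew-symmetry $b(w,v,v)=0$.

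For stability, I would test \eqref{PicardO} against $v = u_{k+1}$. Since $b(u_k, u_{k+1}, u_{k+1}) = 0$ by the skew-symmetry of the trilinear form on $X$, the nonlinear contribution vanishes and one is left with
\begin{equation*}
\nu \|\nabla u_{k+1}\|^2 = \langle f, u_{k+1}\rangle \le \|f\|_{-1}\|\nabla u_{k+1}\|.
\end{equation*}
Dividing yields the uniform bound $\|\nabla u_{k+1}\| \le \nu^{-1}\|f\|_{-1}$, which is independent of $k$ (and of any smallness assumption on the data), giving unconditional stability.

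For convergence under $\alpha<1$, let $e_k := u - u_k$ denote the error, where $u$ is the unique NSE solution guaranteed by the previous lemma. Subtracting \eqref{PicardO} from \eqref{wd} gives, for all $v\in V$,
\begin{equation*}
\nu(\nabla e_{k+1},\nabla v) + b(u,u,v) - b(u_k, u_{k+1}, v) = 0.
\end{equation*}
The key algebraic step is to split the trilinear difference as
\begin{equation*}
b(u,u,v) - b(u_k,u_{k+1},v) = b(e_k, u, v) + b(u_k, e_{k+1}, v),
\end{equation*}
which is the usual trick of adding and subtracting $b(u_k, u, v)$. Testing with $v = e_{k+1}\in V$ kills the term $b(u_k, e_{k+1}, e_{k+1})$ by skew-symmetry, leaving
\begin{equation*}
\nu \|\nabla e_{k+1}\|^2 = -b(e_k, u, e_{k+1}).
\end{equation*}

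Applying the bound \eqref{bbound2} together with the a priori estimate \eqref{Pri} for the true solution gives
\begin{equation*}
\nu\|\nabla e_{k+1}\|^2 \le M\|\nabla e_k\|\,\|\nabla u\|\,\|\nabla e_{k+1}\| \le M\nu^{-1}\|f\|_{-1}\|\nabla e_k\|\,\|\nabla e_{k+1}\|,
\end{equation*}
and dividing by $\nu\|\nabla e_{k+1}\|$ produces $\|\nabla e_{k+1}\| \le \alpha\|\nabla e_k\|$. Iterating yields the geometric decay $\|\nabla e_k\| \le \alpha^k \|\nabla e_0\|$, and since $\alpha<1$ we conclude $u_k \to u$ in $H^1$ at linear rate $\alpha$. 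No step is a genuine obstacle here; the only subtlety is choosing the right decomposition of the trilinear term so that skew-symmetry eliminates the $u_k$-dependent factor, leaving precisely the $\|\nabla u\|$ estimate that produces the sharp rate $\alpha$.
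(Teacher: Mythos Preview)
Your proof is correct and is precisely the standard argument. Note that the paper does not actually prove this lemma; it is stated as a recalled result from \cite{GR86}, and your write-up is essentially the classical proof found there.
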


\subsection{CDA and CDA-Picard}

We now give CDA preliminaries and define CDA-Picard.  Denote by $\tau_H(\Omega)$ a coarse mesh of $\Omega$ used to represent an interpolant of the true solution using measurements or observables.  We assume that $I_H$ satisfies the usual properties as other CDA applications: there exists a constant $C_I$ independent of $H$ satisfying
		\begin{align}
			\label{interpolationi}	\|I_Hv-v\|\leq C_IH \| \nabla v\|~~~~\forall v\in X,\\
			\label{interpolationi2}	\|I_Hv\|\leq C_I \|v\|~~~~\forall v\in X. 
		\end{align}
Some examples of such interpolation operators include Bernardi-Girault \cite{BG98}, Scott–Zhang \cite{SZ90}, and the $L^2$ projection onto piecewise constants \cite{fem:book:ern:guermond}.

%
%

The CDA-Picard iteration is given in its $V$-formulation by
\begin{align}
			\label{CDAPicard}	&a\left({u}_{k+1},v\right)+b\left({u}_{k},{u}_{k+1},v\right)+\mu (I_Hu_{k+1}-I_H  u,I_H v)=\left\langle{f},v\right\rangle~~\text{(CDA-Picard)}.
		\end{align}
The term $\mu I_H(u_{k+1}-u)$ is a CDA nudging (penalty) driving the Step $k+1$ solution $u_{k+1}$ towards to the observations, and $\mu>0$ is a relaxation parameter that emphasizes the observations' accuracy.  We note that CDA-Picard uses a type of variational crime on the nudging term, by using $I_H$ in the second argument of the nudging terms.  While this is consistent if $I_H$ is an $L^2$ projection, in general it is not consistent but using $I_H$ additionally on the test function is key to allowing for less restrictions on parameters and no upper bound on $\mu$ \cite{GNT18, GN20, RZ21}.  

Convergence of CDA-Picard is proven in \cite{LHRV23} in terms of the following weighted $H^1$ norm.
	\begin{align}\label{d1}
		\|v\|_{*}=\left(\|\nabla v\|^2+\frac{1}{2C_I^2H^2}\| v\|^2\right)^{\frac{1}{2}}.
\end{align}
The result is given next.  It shows that convergence is guaranteed for any $\alpha$ provided enough data is available, and that the convergence rate $O(H^{1/2}\alpha)$ is improved by including data measurements.

\begin{theorem}\label{Pcac} [Convergence of CDA-Picard \cite{LHRV23}]
Let $u$ be a steady NSE solution and suppose $I_H(u)$ is known.  If $\sqrt{2}C_IH\alpha^2<1$ and $\mu\geq \frac{\nu}{4C_I^2H^2}$, then for any initial guess $u_0$ the CDA-Picard iteration \eqref{CDAPicard} converges to $u$ linearly with rate (at least) $\sqrt{\sqrt{2} C_IH}\alpha$:
	\begin{align}\label{CDAPC}
		\|u - u_{k+1}\|_{*}&\leq \sqrt{ \sqrt{2}C_IH}\alpha\|u - u_{k}\|_{*}.
	\end{align}
\end{theorem}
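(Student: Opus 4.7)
The plan is a direct energy argument on the iteration error $e_k := u - u_k \in V$. First I would subtract the CDA--Picard identity \eqref{CDAPicard} from the weak formulation \eqref{wd}; inserting the trivially zero term $\mu(I_H u - I_H u, I_H v)$ into \eqref{wd} gives the error identity
\[
a(e_{k+1},v) + b(u,u,v) - b(u_k,u_{k+1},v) + \mu(I_H e_{k+1}, I_H v) = 0 \quad \forall v\in V.
\]
Using $u_{k+1} = u - e_{k+1}$ and $u_k = u - e_k$, and adding/subtracting $b(u_k,u,v)$, the nonlinear difference rewrites as
\[
b(u,u,v) - b(u_k, u_{k+1},v) = b(u,e_{k+1},v) + b(e_k,u,v) - b(e_k,e_{k+1},v).
\]
Testing with $v = e_{k+1}$, the skew-symmetry identity $b(\cdot,w,w)=0$ annihilates the first and third trilinear contributions, leaving the scalar identity
\[
\nu\|\nabla e_{k+1}\|^2 + \mu\|I_H e_{k+1}\|^2 = -b(e_k,u,e_{k+1}).
\]

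Next I would bound the right-hand side via \eqref{bbound1} together with the a priori estimate $\|\nabla u\| \le \nu^{-1}\|f\|_{-1}$, which yields $M\|\nabla u\| \le \nu\alpha$, and then use Young's inequality to absorb half of the viscous dissipation:
\[
|b(e_k,u,e_{k+1})| \le \nu\alpha\,\|e_k\|^{1/2}\|\nabla e_k\|^{1/2}\|\nabla e_{k+1}\| \le \tfrac{\nu}{2}\|\nabla e_{k+1}\|^2 + \tfrac{\nu\alpha^2}{2}\|e_k\|\|\nabla e_k\|.
\]
The crucial step is converting the partial measurement $\mu\|I_H e_{k+1}\|^2$ into genuine $L^2$ control on $e_{k+1}$. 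For this I would combine $(a+b)^2 \le 2a^2 + 2b^2$ with the interpolation bound \eqref{interpolationi} to get $\|e_{k+1}\|^2 \le 2\|I_H e_{k+1}\|^2 + 2C_I^2 H^2 \|\nabla e_{k+1}\|^2$, equivalently
\[
\mu\|I_H e_{k+1}\|^2 \ge \tfrac{\mu}{2}\|e_{k+1}\|^2 - \mu C_I^2 H^2 \|\nabla e_{k+1}\|^2.
\]
The hypothesis $\mu \ge \nu/(4C_I^2 H^2)$ is precisely tuned so that this negative $\|\nabla e_{k+1}\|^2$ piece is dominated by the remaining $\tfrac{\nu}{2}\|\nabla e_{k+1}\|^2$, and the $\tfrac{\mu}{2}\|e_{k+1}\|^2$ part matches the $H^{-2}$ weighting in \eqref{d1}; combining gives
\[
\tfrac{\nu}{4}\|e_{k+1}\|_*^2 \le \tfrac{\nu\alpha^2}{2}\|e_k\|\|\nabla e_k\|.
\]

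To close the argument I would use the elementary bounds $\|\nabla e_k\| \le \|e_k\|_*$ and $\|e_k\| \le \sqrt{2}C_IH\,\|e_k\|_*$, which follow immediately from \eqref{d1}, to estimate the product $\|e_k\|\|\nabla e_k\| \le \sqrt{2}C_IH\,\|e_k\|_*^2$; taking square roots yields a contraction with rate proportional to $\sqrt{\sqrt{2}C_IH}\,\alpha$, which is strictly less than one whenever $\sqrt{2}C_IH\alpha^2 < 1$. The main obstacle is the simultaneous calibration of $\mu$ and the weighted norm: the nudging must be strong enough to turn $I_H$-data into $L^2$-control of $e_{k+1}$ (forcing $\mu \gtrsim H^{-2}$), but the variational crime $\mu(I_H e_{k+1}, I_H v)$ creates the compensating $\mu C_I^2 H^2\|\nabla e_{k+1}\|^2$ term which must be controlled by the viscous dissipation; the choice $\mu \ge \nu/(4 C_I^2 H^2)$ and the weight $1/(2C_I^2 H^2)$ in \eqref{d1} are exactly the values that balance these competing requirements and produce the $O(\sqrt{H})$ improvement over the classical Picard rate $\alpha$.
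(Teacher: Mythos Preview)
Your argument has a genuine gap at the step where you convert $\mu\|I_H e_{k+1}\|^2$ into $L^2$ control. You write
\[
\mu\|I_H e_{k+1}\|^2 \ge \tfrac{\mu}{2}\|e_{k+1}\|^2 - \mu C_I^2 H^2\|\nabla e_{k+1}\|^2
\]
and claim the hypothesis $\mu \ge \nu/(4C_I^2H^2)$ ensures the negative gradient piece is absorbed by the remaining $\tfrac{\nu}{2}\|\nabla e_{k+1}\|^2$. The direction is backwards: a \emph{lower} bound on $\mu$ makes $\mu C_I^2 H^2$ \emph{larger}. Absorption would require $\mu C_I^2 H^2 \le \nu/4$, i.e.\ $\mu \le \nu/(4C_I^2H^2)$. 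Your argument therefore works only at the single value $\mu = \nu/(4C_I^2H^2)$ and breaks for every larger $\mu$ permitted by the theorem.

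The present paper does not actually prove this theorem (it is quoted from \cite{LHRV23}), but the analogous arguments it does carry out---see the proofs of Theorems~\ref{thmerr1} and~\ref{rescon2}---avoid this trap by going the other direction. Rather than lower-bounding the nudging term, one splits the viscous term and uses \eqref{interpolationi} on one half to get $\tfrac{\nu}{4}\|\nabla e_{k+1}\|^2 \ge \tfrac{\nu}{4C_I^2H^2}\|(I-I_H)e_{k+1}\|^2$, and then combines
\[
\tfrac{\nu}{4C_I^2H^2}\|(I-I_H)e_{k+1}\|^2 + \mu\|I_H e_{k+1}\|^2 \ \ge\ \min\Big\{\tfrac{\nu}{4C_I^2H^2},\,\mu\Big\}\cdot\tfrac12\|e_{k+1}\|^2 \ =\ \tfrac{\nu}{8C_I^2H^2}\|e_{k+1}\|^2.
\]
Here the hypothesis $\mu \ge \nu/(4C_I^2H^2)$ is used in the correct direction: it forces the minimum to equal $\nu/(4C_I^2H^2)$ regardless of how large $\mu$ is. Together with the retained $\tfrac{\nu}{4}\|\nabla e_{k+1}\|^2$ this gives exactly $\tfrac{\nu}{4}\|e_{k+1}\|_*^2$ on the left, after which your final paragraph (bounding $\|e_k\|\|\nabla e_k\|$ by $\sqrt{2}C_I H\|e_k\|_*^2$) goes through.
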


\section{An improved analysis for the CDA-Picard iteration}\label{CDA-Picard}

We give below a new convergence analysis for CDA-Picard under the assumption that the data measurements are accurate (the next section considers the case of noisy measurement data).  While the result from Theorem \ref{Pcac} shows that data improves convergence, the *-norm is not an ideal norm to use because it is a weighted $H^1$ norm where the weight depends on $H$.  So decreasing $H$ will improve the contraction ratio, but at the same time the norm is closer to the $L^2$ norm and further from the $H^1$ norm.  Hence it would be better to show this kind of convergence result in a norm not dependent on $H$, and below we give a new convergence results for CDA-Picard in the $L^2$ norm.

%
%
%

\begin{theorem}\label{Pcac2} [$L^2$ Convergence of CDA-Picard]
Let $u$ be a steady NSE solution and suppose $I_H(u)$ is known.  Set $\gamma:=\frac{1}{K_1}\min\left\{ \frac{\nu}{ C_I^{2}H^{2}},\mu\right\}$, and suppose $\mu> 2 K_1$ and $H< \frac{\nu^{1/2}}{(2K_1)^{1/2}C_I}$.  Then it holds
	\begin{align}\label{CDAPC2}
\|\bu_{k+1}-\bu\|\le \frac{\sqrt{2}}{\sqrt{\gamma}} \|\bu_k-\bu\|.
	\end{align}
Since $\gamma>2$, CDA-Picard converges linearly in $L^2$ and the convergence is faster whenever $H$ decreases and $\mu$ increases.
\end{theorem}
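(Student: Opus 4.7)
The plan is to follow a standard energy-argument template for nonlinear-solver convergence: derive an error equation, test it against the current error, eliminate the bulk of the nonlinear contribution by skew-symmetry, bound what remains, and then convert the natural energy-type lower bound into an $L^2$ lower bound via the interpolation estimate \eqref{interpolationi}. This last conversion is where the $L^2$ norm (rather than the weighted $*$-norm of Theorem \ref{Pcac}) emerges.

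First, I would subtract the weak steady NSE \eqref{wd} from \eqref{CDAPicard} to get an error equation for $e_{k+1}:=u_{k+1}-u$. Writing $e_k:=u_k-u$ and using the identity $b(u_k,u_{k+1},v)-b(u,u,v)=b(u_k,e_{k+1},v)+b(e_k,u,v)$, I obtain, for every $v\in V$,
\begin{equation*}
\nu(\nabla e_{k+1},\nabla v)+b(u_k,e_{k+1},v)+b(e_k,u,v)+\mu(I_H e_{k+1},I_H v)=0.
\end{equation*}
Testing with $v=e_{k+1}$ and using $b(u_k,e_{k+1},e_{k+1})=0$ (since $u_k\in V$) yields
\begin{equation*}
\nu\|\nabla e_{k+1}\|^2+\mu\|I_H e_{k+1}\|^2 = -b(e_k,u,e_{k+1}).
\end{equation*}
The right-hand side I would bound directly using the assumption $\|\nabla u\|_\infty=K_1<\infty$:
\begin{equation*}
|b(e_k,u,e_{k+1})|\le K_1\|e_k\|\|e_{k+1}\|,
\end{equation*}
(possibly with a Young's inequality split to produce the stated $\sqrt{2}/\sqrt{\gamma}$ constant rather than the slightly sharper $2/\gamma$ one gets with Cauchy–Schwarz alone).

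The decisive step is converting the energy-type lower bound on the left into one involving $\|e_{k+1}\|^2$. Splitting
\begin{equation*}
\|e_{k+1}\|\le\|e_{k+1}-I_H e_{k+1}\|+\|I_H e_{k+1}\|\le C_I H\|\nabla e_{k+1}\|+\|I_H e_{k+1}\|,
\end{equation*}
and squaring with the $(a+b)^2\le 2a^2+2b^2$ inequality gives $\|e_{k+1}\|^2\le 2C_I^2H^2\|\nabla e_{k+1}\|^2+2\|I_H e_{k+1}\|^2$. Hence
\begin{equation*}
\nu\|\nabla e_{k+1}\|^2+\mu\|I_H e_{k+1}\|^2\;\ge\;\min\!\left(\frac{\nu}{2C_I^2H^2},\frac{\mu}{2}\right)\|e_{k+1}\|^2
=\frac{K_1\gamma}{2}\|e_{k+1}\|^2,
\end{equation*}
where $\gamma$ is as defined in the theorem. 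Combining this with the trilinear bound and dividing by $\|e_{k+1}\|$ produces the claimed contraction, and the standing conditions $\mu>2K_1$ and $H<\nu^{1/2}/((2K_1)^{1/2}C_I)$ ensure $\gamma>2$ so that the ratio is strictly less than $1$.

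I do not anticipate a real obstacle: the only care-points are the skew-symmetry step (which requires $u_k\in V$, automatic in the $V$-formulation) and the choice of constants in the Young split so as to land on the exact contraction factor stated. The underlying mechanism is that the nudging term together with the interpolation estimate plays the role that the Poincar\'e inequality plays in a purely $H^1$-based argument, allowing the ``$\mu$-piece'' to control the large-scale part of $e_{k+1}$ while the diffusion controls the fine-scale (unobserved) part.
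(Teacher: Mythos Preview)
Your argument is correct and in fact slightly simpler and sharper than the paper's. After reaching the identical starting point
\[
\nu\|\nabla e_{k+1}\|^2+\mu\|I_H e_{k+1}\|^2=-b(e_k,u,e_{k+1})\le K_1\|e_k\|\,\|e_{k+1}\|,
\]
the paper does \emph{not} immediately lower-bound the left side in $L^2$. Instead it applies Young's inequality to produce a term $\tfrac{K_1}{2}\|e_{k+1}\|^2$ on the right, splits this via $\|e_{k+1}\|^2\le 2\|I_He_{k+1}\|^2+2\|(I-I_H)e_{k+1}\|^2$, and then absorbs the two pieces into the $\mu$- and $\nu$-terms using the hypotheses $\mu>2K_1$ and $H<\nu^{1/2}/((2K_1)^{1/2}C_I)$ (this is the technique of \cite{GNT18}). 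Only after this absorption, which costs a factor of $2$, does the paper convert to $L^2$ via the same interpolation split you use, landing on $\|e_{k+1}\|\le\sqrt{2/\gamma}\,\|e_k\|$.

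Your direct route bypasses the absorption step entirely: dividing $\tfrac{K_1\gamma}{2}\|e_{k+1}\|^2\le K_1\|e_k\|\,\|e_{k+1}\|$ through by $\|e_{k+1}\|$ gives $\|e_{k+1}\|\le(2/\gamma)\|e_k\|$, which is strictly smaller than $\sqrt{2/\gamma}$ whenever $\gamma>2$. Moreover, in your version the hypotheses on $\mu$ and $H$ are used only to guarantee $\gamma>2$ at the very end, whereas in the paper they are invoked structurally in the middle of the estimate. So your proof establishes the theorem (and a bit more), but be aware that it does not reproduce the exact constant $\sqrt{2}/\sqrt{\gamma}$ as stated; if matching that constant is required, you would need to follow the paper's Young-then-absorb sequence.
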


\begin{remark}
In the finite element setting where subspace $(X_h,Q_h)\subset (X,Q)$ but where $\nabla \cdot X_h \not\subset Q_h$ (e.g. Taylor-Hood elements), the trilinear term $b(u,v,w)$ needs to be modified by a consistent skew-symmetrization (e.g. adding $\frac12 ((\nabla \cdot u)v,w)$) for stability or convergence results to hold.  Provided grad-div stabilization is also used, the result above will still hold with just slight modifications.  In this case, one reaches the bound \eqref{CDAPC2} with norm 
$||\cdot||_{\square}$ instead of $||\cdot||$ where $||v||_{\square}=\left(||v||^2+||\nabla \cdot v||^2\right)^{1/2}$. Our numerical tests use divergence-free Scott-Vogelius elements where $\nabla \cdot X_h \subset Q_h$ does hold.
\end{remark}

\begin{proof}
Denoting by $\be_k=\bu-\bu_k$ and arguing as in \cite{LHRV23} it is easy to get
$$
\nu \|\nabla \be_{k+1}\|^2+\mu \|I_H \be_{k+1}\|^2=-b(\be_k,\bu,\be_{k+1}).
$$
The nonlinear term treatment is a key difference from the analysis of \cite{LHRV23}, and begins with H\"older's inequality with $L^2-L^{\infty}-L^2$ followed by Young's inequality,
\begin{eqnarray*}
\nu \|\nabla \be_{k+1}\|^2+\mu \|I_H \be_{k+1}\|^2 & \le & \|\be_k\|\|\nabla \bu\|_{\infty}\|\be_{k+1}\| \\
&\le& \frac{1}{2}\|\nabla \bu\|_{\infty}\|\be_{k+1}\|^2+\frac{1}{2}\|\nabla \bu\|_{\infty}\|\be_{k}\|^2\\
&\le& \frac{1}{2}K_1\|\be_{k+1}\|^2+\frac{1}{2}K_1\|\be_{k}\|^2.
\end{eqnarray*}
Denoting $L=\frac{1}{2}K_1$ and arguing as in \cite{GNT18} we can write
$$
L\|\be_{k+1}\|^2\le 2L \|I_H\be_{k+1}\|^2+2L\|(I-I_H)\be_{k+1}\|^2.
$$
Since by assumption $\mu-2L\ge \mu/2$ (which gives $\mu \ge 4L$), we get the bound
\begin{eqnarray*}
\nu \|\nabla \be_{k+1}\|^2+\frac{\mu}{2}\|I_H \be_{k+1}\|^2-2L\|(I-I_H)\be_{k+1}\|^2\le \frac{1}{2}K_1\|\be_{k}\|^2.
\end{eqnarray*}
Arguing again as in \cite{GNT18}, applying \eqref{interpolationi} and using the assumption on $H$ we find
$$
\nu \|\nabla \be_{k+1}\|^2-2L\|(I-I_H)\be_{k+1}\|^2\ge \nu \|\nabla \be_{k+1}\|^2-2L C_I^2H^2\|\nabla \be_{k+1}\|^2
\ge \frac{\nu}{2}\|\nabla \be_{k+1}\|^2.
$$
Combining these estimates now yields
\begin{eqnarray*}
{\nu} \|\nabla \be_{k+1}\|^2+{\mu}\|I_H \be_{k+1}\|^2\le K_1\|\be_{k}\|^2.
\end{eqnarray*}

The bound \eqref{interpolationi}  gives us
\begin{eqnarray*}
\frac{\nu}{C_I^{2}H^{2}} \|(I-I_H)\be_{k+1}\|^2+{\mu }\|I_H \be_{k+1}\|^2\le{\nu} \|\nabla \be_{k+1}\|^2+{\mu }\|I_H \be_{k+1}\|^2\le K_1\|\be_{k}\|^2,
\end{eqnarray*}
and from the definition of $\gamma$ and taking into account
$$
\|I_H \be_{k+1}\|^2+\|(I-I_H)\be_{k+1}\|^2\ge\frac{1}{2} \|\be_{k+1}\|^2,
$$
we conclude that
$$
\frac{\gamma}{2}\|\be_{k+1}\|^2\le \|\be_k\|^2.
$$
Reducing implies
$$
\|\bu_{k+1}-\bu\|\le \frac{\sqrt{2}}{\sqrt{\gamma}} \|\bu_k-\bu\|,
$$
which is the stated estimate.
\end{proof}

\section{CDA-Picard with noisy data}

We consider now the case where the data used for nudging has noise (error).  That is, instead of knowing $u(x_i)$ at all the measurement points, we instead know $u(x_i)+\epsilon_i$.  Hence we will write $I_H \epsilon$ to denote the coarse mesh interpolant constructed from $\{ \epsilon_i \}_{i=1}^N$.  Typically, these errors can be considered random, and may result for example from errors in the measurement process.  In practice one may even know the relative size or distribution of the errors.  

We denote by $\epsilon\in V$ any divergence-free function such that $I_H\epsilon (x_i)=\epsilon_i$, and assume that 
\begin{equation}\label{uhat}
\| \nabla (u+\epsilon) \| \le (1+\delta) \| \nabla u \|,
\end{equation}
and thus one would expect $\delta$ to be small since it represents the size of the noise relative to the size of the solution.  It will be useful in our analysis to define 
\begin{equation}\label{alfadelta}
\alpha_{\delta} = (1+\delta)\alpha=(1+\delta)M\nu^{-2}\|f\|_{-1},
\end{equation} and 
\begin{equation}\label{cdelta}
C_{\delta} = 2\sqrt{1 + (1+\delta)^2}.
\end{equation}

\begin{alg}[CDA-Picard for noisy data]
The CDA-Picard iteration with noise is defined by: find $\bu_{k+1}\in V$ such that
\begin{eqnarray}\label{eq:pi_grad_div}
\nu(\nabla \bu_{k+1}, \nabla \bv)+b(\bu_k,\bu_{k+1},\bv)+\mu (I_H \bu_{k+1}-I_H (\bu + \epsilon),I_H \bv)=(f,\bv) \quad \forall \bv\in \bV.
\end{eqnarray}
\end{alg}

\subsection{Stability}
Let us denote by
 \begin{equation}\label{lambdahat}
 \hat \lambda = \min \bigg\{ \frac{\nu}{4 C_I^2 H^2},\ \frac{\mu}{2} \bigg\}.
 \end{equation}
We now prove two stability results.  The first is for any data, while the second one is sharper but requires a smallness condition of the PDE parameters  (not the measurement data).

\begin{lem}[Stability result 1] \label{lem1}
For any $\nu>0$, $\alpha>0$ and $\mu>0$ and $\hat\lambda$ defined in \eqref{lambdahat} the $k+1$ iterate satisfies the bound
\begin{equation}
\frac{\nu}{2} \| \nabla u_{k+1} \|^2  + \hat \lambda \| u_{k+1} \|^2 \le \nu^{-1} \| f \|_{-1}^2 + 2\mu C_I^2 C_P^2 \nu^{-2} \|f \|_{-1}^2 +  2 \mu \| I_H \epsilon \|^2. \label{bound1}
\end{equation}
Furthermore, if $\mu \le \frac{\nu}{2C_I^2 H^2}$, then 
\begin{equation}
\| u_{k+1} \|^2 \le 2\mu^{-1} \nu^{-1} \| f \|_{-1}^2 + 4 C_I^2 C_P^2 \nu^{-2} \| f\|_{-1}^2 + 4 \| I_H \epsilon \|^2. \label{bound1a}
\end{equation}
\end{lem}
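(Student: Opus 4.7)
The plan is the standard energy-testing argument: substitute $v = u_{k+1} \in V$ into \eqref{eq:pi_grad_div}. Since $b(u_k, u_{k+1}, u_{k+1}) = 0$, the trilinear term drops out and one is left with
\begin{equation*}
\nu \|\nabla u_{k+1}\|^2 + \mu \|I_H u_{k+1}\|^2 = (f, u_{k+1}) + \mu(I_H u, I_H u_{k+1}) + \mu(I_H \epsilon, I_H u_{k+1}).
\end{equation*}
I would bound each right-hand term by Cauchy--Schwarz and Young's inequality with the splittings $(f,u_{k+1}) \le \tfrac{1}{2\nu}\|f\|_{-1}^2 + \tfrac{\nu}{2}\|\nabla u_{k+1}\|^2$, and $\mu(I_H w, I_H u_{k+1}) \le \mu\|I_H w\|^2 + \tfrac{\mu}{4}\|I_H u_{k+1}\|^2$ applied with both $w=u$ and $w=\epsilon$. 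Using \eqref{Pri}, Poincar\'e, and \eqref{interpolationi2} gives $\|I_H u\| \le C_I C_P \nu^{-1}\|f\|_{-1}$, and after absorbing the $\tfrac{\nu}{2}\|\nabla u_{k+1}\|^2$ and $\tfrac{\mu}{2}\|I_H u_{k+1}\|^2$ from the right and multiplying through by $2$ this produces the intermediate bound
\begin{equation*}
\nu\|\nabla u_{k+1}\|^2 + \mu \|I_H u_{k+1}\|^2 \le \tfrac{1}{\nu}\|f\|_{-1}^2 + 2\mu C_I^2 C_P^2 \nu^{-2}\|f\|_{-1}^2 + 2\mu\|I_H\epsilon\|^2,
\end{equation*}
whose right-hand side is already exactly that of \eqref{bound1}.

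To replace the $\mu\|I_H u_{k+1}\|^2$ on the left by $\hat\lambda\|u_{k+1}\|^2$, I would invoke the standard $L^2$ decomposition
\begin{equation*}
\|u_{k+1}\|^2 \le 2\|I_H u_{k+1}\|^2 + 2\|(I-I_H)u_{k+1}\|^2 \le 2\|I_H u_{k+1}\|^2 + 2 C_I^2 H^2 \|\nabla u_{k+1}\|^2,
\end{equation*}
via the triangle inequality and \eqref{interpolationi}. The definition $\hat\lambda = \min\{\nu/(4 C_I^2 H^2),\,\mu/2\}$ from \eqref{lambdahat} is chosen precisely so that $2\hat\lambda \le \mu$ and $2\hat\lambda C_I^2 H^2 \le \nu/2$, so multiplying the decomposition by $\hat\lambda$ yields $\hat\lambda\|u_{k+1}\|^2 \le \tfrac{\nu}{2}\|\nabla u_{k+1}\|^2 + \mu\|I_H u_{k+1}\|^2$. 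Adding $\tfrac{\nu}{2}\|\nabla u_{k+1}\|^2$ to each side and chaining with the intermediate bound gives \eqref{bound1}.

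The second bound is then a one-line consequence of the first: the hypothesis $\mu \le \nu/(2 C_I^2 H^2)$ is exactly $\mu/2 \le \nu/(4 C_I^2 H^2)$, so the minimum in $\hat\lambda$ is realized at $\mu/2$. Substituting $\hat\lambda = \mu/2$ into \eqref{bound1}, dropping the nonnegative viscous term, and multiplying through by $2/\mu$ delivers \eqref{bound1a}. The only real care in the plan is constant bookkeeping: the stated right-hand side of \eqref{bound1} is a factor of two looser than the cleanest direct energy estimate, and that slack is exactly what lets the coefficients $1/(4C_I^2 H^2)$ and $1/2$ in $\hat\lambda$ survive without being halved again in the final step.
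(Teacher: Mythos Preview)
Your proposal is correct and follows essentially the same route as the paper: test with $v=u_{k+1}$, apply Young's inequality with the same splittings to reach the intermediate bound $\nu\|\nabla u_{k+1}\|^2+\mu\|I_H u_{k+1}\|^2\le\text{RHS}$, then use \eqref{interpolationi} together with the triangle inequality and the definition of $\hat\lambda$ to convert $\frac{\nu}{2}\|\nabla u_{k+1}\|^2+\mu\|I_H u_{k+1}\|^2$ into a lower bound for $\hat\lambda\|u_{k+1}\|^2$. The second bound is derived identically by both arguments.
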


\begin{proof}
Taking $v=u_{k+1}$ in \eqref{eq:pi_grad_div} vanishes the nonlinear term and leaves
\[
\nu \| \nabla u_{k+1} \|^2 + \mu \| I_H u_{k+1} \|^2 = (f,u_{k+1}) + \mu (I_H u,I_H u_{k+1}) + \mu (I_H \epsilon, I_H u_{k+1}).
\]
Using the definition of the $H^{-1}$ norm on the forcing term and Cauchy-Schwarz on the other right hand side terms, followed by applying Young's inequality on all of them, we obtain
\[
\nu \| \nabla u_{k+1} \|^2 + \mu \| I_H u_{k+1} \|^2 \le \nu^{-1} \| f \|_{-1}^2 + 2\mu \| I_H u \|^2 + 2 \mu \| I_H \epsilon \|^2.
\]
Breaking up the viscous term on the left hand side into two pieces, using property \eqref{interpolationi} of $I_H$ on this and property \eqref{interpolationi2} on the second right hand side term and then Poincar\'e, we get that
\[
\frac{\nu}{2} \| \nabla u_{k+1} \|^2 + \frac{\nu}{2 C_I^2 H^2} \| u_{k+1} - I_H u_{k+1}  \|^2  + \mu \| I_H u_{k+1} \|^2 \le \nu^{-1} \| f \|_{-1}^2 + 2\mu C_I^2 C_P^2  \| \nabla u \|^2 + 2 \mu \| I_H \epsilon \|^2.
\]
With the definition of $\hat\lambda$ \eqref{lambdahat} we get from the triangle inequality a lower bound on the left hand side which provides the bound
\[
\frac{\nu}{2} \| \nabla u_{k+1} \|^2  + \hat \lambda  \| u_{k+1} \|^2 \le \nu^{-1} \| f \|_{-1}^2 + 2\mu C_I^2 C_P^2 \nu^{-2} \|f \|_{-1}^2 +  2 \mu \| I_H \epsilon \|^2,
\]
thanks to  the bound on $u$ from \eqref{Pri}.

The bound \eqref{bound1a} can be deduced from \eqref{bound1} by dropping the first left hand side term, reducing the min term to $\frac{\mu}{2}$, multiplying both sides by $\frac{2}{\mu}$ and reducing.

\end{proof}

The following stability bound assumes the PDE parameters and noise are sufficiently small, and shows that the stability bound is almost the same as the steady NSE solution bound \eqref{Pri}, up to an O(1) constant factor.

\begin{lem}[Stability result 2] \label{lem2}
Let $\alpha_\delta$ and $C_{\delta}$ be as in \eqref{alfadelta}, \eqref{cdelta} with $\alpha_{\delta}<\frac13$.  Then for $k$ sufficiently large,
\[
\| \nabla u_{k+1} \| \le 2\nu^{-1} \| f \|_{-1} \sqrt{ 1 + (1+\delta)^2 } = C_{\delta} \nu^{-1} \| f\|_{-1}.
\]
\end{lem}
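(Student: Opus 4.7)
The plan is to derive a $\mu$-independent bound for $\|\nabla u_{k+1}\|$ by tracking the discrepancy $\hat e_{k+1} := u_{k+1} - \tilde u$, where $\tilde u := u+\epsilon$, rather than $u_{k+1}$ itself. The natural test function is $v = \hat e_{k+1} \in V$; this choice makes the nudging contribution in \eqref{eq:pi_grad_div} self-contained on the left-hand side as $\mu\|I_H \hat e_{k+1}\|^2$, eliminating the $\mu$-dependence that produces the weaker bound of Lemma \ref{lem1}. With this test, combined with $b(u_k, \hat e_{k+1}, \hat e_{k+1}) = 0$ (after writing $u_{k+1} = \hat e_{k+1} + \tilde u$ inside the convective term) and the weak NSE identity $(f,\hat e_{k+1}) = \nu(\nabla u, \nabla\hat e_{k+1}) + b(u,u,\hat e_{k+1})$, I expect to arrive at the error identity
\[
\nu\|\nabla \hat e_{k+1}\|^2 + \mu\|I_H \hat e_{k+1}\|^2 = -\nu(\nabla\epsilon, \nabla\hat e_{k+1}) - b(u_k - u, u, \hat e_{k+1}) - b(u_k, \epsilon, \hat e_{k+1}),
\]
where the splitting of $b(u,u,\cdot)-b(u_k,\tilde u,\cdot)$ is obtained by adding and subtracting $b(u_k, u, \hat e_{k+1})$.

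The three right-hand side terms are then bounded by Cauchy--Schwarz and the trilinear estimate \eqref{bbound2}, together with $\|\nabla u\| \le \nu^{-1}\|f\|_{-1}$ from \eqref{Pri}, $\|\nabla\epsilon\| \le \delta\nu^{-1}\|f\|_{-1}$ from \eqref{uhat}, and the identity $M\nu^{-1}\|f\|_{-1} = \alpha\nu$. After discarding the nonnegative nudging term on the left and dividing by $\nu\|\nabla\hat e_{k+1}\|$, the result is a scalar inequality of the form $\|\nabla \hat e_{k+1}\| \le \delta\nu^{-1}\|f\|_{-1} + \alpha\|\nabla(u_k - u)\| + \alpha\delta\|\nabla u_k\|$. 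Setting $L_k := \|\nabla u_k\|$ and applying the triangle inequalities $\|\nabla(u_k - u)\|\le L_k + \nu^{-1}\|f\|_{-1}$ and $L_{k+1} \le (1+\delta)\nu^{-1}\|f\|_{-1} + \|\nabla \hat e_{k+1}\|$, I then obtain a scalar recursion
\[
L_{k+1} \le A_{\alpha,\delta}\,\nu^{-1}\|f\|_{-1} + \alpha_\delta L_k
\]
with a constant $A_{\alpha,\delta}$ that is independent of $\mu$. Because $\alpha_\delta < 1/3 < 1$, this is contractive, so $L_k$ converges to a finite limit bounded by $A_{\alpha,\delta}\,\nu^{-1}\|f\|_{-1}/(1-\alpha_\delta)$, and the claim follows for $k$ sufficiently large after verifying that this constant is majorized by $C_\delta$.

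The main obstacle I anticipate is not the general strategy but the precise bookkeeping. The decomposition of $b(u,u,\cdot)-b(u_k,\tilde u,\cdot)$ admits several algebraically equivalent forms, but only the one above controls each resulting trilinear factor cleanly by either $\alpha$ or $\delta$; alternative splittings leave a residual quadratic in $L_k$ or produce a contraction coefficient strictly larger than $\alpha_\delta$, wasting the smallness hypothesis. Once the identity is in place, the remaining difficulty is choosing Young weights judiciously so that the final constant matches the stated $C_\delta = 2\sqrt{1+(1+\delta)^2}$: a naive Cauchy--Schwarz gives a slightly larger constant, so some tuning is unavoidable.
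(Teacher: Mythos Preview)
Your overall strategy of testing \eqref{eq:pi_grad_div} with $v=\hat e_{k+1}=u_{k+1}-(u+\epsilon)$ is exactly the test function used in the paper, and it correctly makes the nudging term sign-definite on the left. The gap is in the claim that $\|\nabla\epsilon\|\le\delta\nu^{-1}\|f\|_{-1}$ ``from \eqref{uhat}''. Assumption \eqref{uhat} only says $\|\nabla(u+\epsilon)\|\le(1+\delta)\|\nabla u\|$, and this does \emph{not} imply $\|\nabla\epsilon\|\le\delta\|\nabla u\|$: take $\epsilon=-2u$, so that $\|\nabla(u+\epsilon)\|=\|\nabla u\|$ while $\|\nabla\epsilon\|=2\|\nabla u\|$. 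The only honest consequence of \eqref{uhat} via the triangle inequality is $\|\nabla\epsilon\|\le(2+\delta)\|\nabla u\|$. Feeding that into your identity inflates both the $-\nu(\nabla\epsilon,\nabla\hat e_{k+1})$ term and the $b(u_k,\epsilon,\hat e_{k+1})$ term; the recursion becomes $L_{k+1}\le(3+2\delta+\alpha)\nu^{-1}\|f\|_{-1}+\alpha(3+\delta)L_k$, with contraction factor $\alpha(3+\delta)$ rather than $\alpha_\delta$, and the limiting constant cannot be tuned down to $C_\delta$.

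The paper sidesteps this precisely by \emph{not} substituting the weak NSE identity for $(f,\hat e_{k+1})$. It keeps $(f,\hat e_{k+1})$ and bounds it by $\|f\|_{-1}\|\nabla\hat e_{k+1}\|$, uses the polarization identity $\nu(\nabla u_{k+1},\nabla\hat e_{k+1})=\tfrac{\nu}{2}\bigl(\|\nabla u_{k+1}\|^2-\|\nabla\hat u\|^2+\|\nabla\hat e_{k+1}\|^2\bigr)$, and writes $b(u_k,u_{k+1},\hat e_{k+1})=-b(u_k,u_{k+1},\hat u)$. In this arrangement $u$ and $\epsilon$ never appear separately: the only quantity needed on the right is $\|\nabla\hat u\|=\|\nabla(u+\epsilon)\|$, which is exactly what \eqref{uhat} controls. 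One then gets the squared recursion $\|\nabla u_{k+1}\|^2\le 2\nu^{-2}\|f\|_{-1}^2(1+(1+\delta)^2)+4\alpha_\delta^2\|\nabla u_k\|^2$, and $\alpha_\delta<\tfrac13$ delivers the stated constant with no further tuning. In short, the step that breaks your argument is the NSE substitution, because it is what forces $\|\nabla\epsilon\|$ to appear in isolation; replacing that step by polarization plus direct duality on $(f,\cdot)$ repairs it.
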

\begin{proof}
Set $\hat u = u + \epsilon$, and choose $v=u_{k+1} - \hat u$ in \eqref{eq:pi_grad_div} to get 
\[
\frac{\nu}{2} \left( \| \nabla u_{k+1} \|^2 - \| \nabla \hat u \|^2 + \| \nabla (u_{k+1}-\hat u) \|^2 \right) + \mu \| I_H (u_{k+1} - \hat u) \|^2
=
(f,u_{k+1} - \hat u) + b(u_k,u_{k+1},\hat u),
\]
using the skew-symmetric property of $b$.  Using the assumed bound on $\hat u$ \eqref{uhat}, \eqref{bbound2}
and \eqref{Pri}, we obtain 
\begin{align*}
\frac{\nu}{2} \| \nabla u_{k+1} \|&^2  + \frac{\nu}{2} \| \nabla (u_{k+1}-\hat u) \|^2 + \mu \| I_H (u_{k+1} - \hat u) \|^2 \\
& \le \frac{\nu}{2} \| \nabla \hat u \|^2 + \| f\|_{-1} \|\nabla (u_{k+1}-\hat u) \| + M \| \nabla u_k \| \| \nabla u_{k+1} \| \| \nabla \hat u \|\\
& \le \frac{\nu^{-1}(1+\delta)^2}{2}\| f \|_{-1}^2  + \frac{\nu^{-1}}{2} \| f\|_{-1}^2 + \frac{\nu}{2} \|\nabla (u_{k+1}-\hat u)\|^2 + M \nu^{-1} \| f \|_{-1}(1+\delta) \| \nabla u_k \| \| \nabla u_{k+1} \| \\
& \le \frac{\nu^{-1}(1+\delta)^2}{2}\| f \|_{-1}^2  + \frac{\nu^{-1}}{2} \| f\|_{-1}^2 + \frac{\nu}{2} \|\nabla (u_{k+1}-\hat u)\|^2 + \nu \alpha_{\delta} \| \nabla u_k \| \| \nabla u_{k+1} \|,
\end{align*}
with the last step thanks to the definition of $\alpha_{\delta}$ \eqref{alfadelta}.  This reduces to
\[
\nu \| \nabla u_{k+1} \|^2 +  2\mu \| I_H (u_{k+1} - \hat u) \|^2
\le 
\nu^{-1} \| f \|_{-1}^2 (1 + (1+\delta)^2 )  + 2\nu \alpha_{\delta} \| \nabla u_k \| \| \nabla u_{k+1} \|.
\]
Next, drop the second term on the left hand size, and then apply Young's inequality to the last right hand side term via
\[
2\nu \alpha_{\delta} \| \nabla u_k \| \| \nabla u_{k+1} \| \le \frac{\nu}{2} \| \nabla u_{k+1} \|^2 + 2\nu \alpha_{\delta}^2 \| \nabla u_k \|^2,
\]
to obtain the bound
\[
\frac{\nu}{2} \| \nabla u_{k+1} \|^2 \le 
\nu^{-1} \| f \|_{-1}^2 (1 + (1+\delta)^2 )  +  2\nu \alpha_{\delta}^2 \| \nabla u_k \|^2.
\]
This reduces to
\begin{equation}
\| \nabla u_{k+1} \|^2 \le 
2\nu^{-2} \| f \|_{-1}^2 (1 + (1+\delta)^2 )  +  4 \alpha_{\delta}^2 \| \nabla u_k \|^2. \label{bound10}
\end{equation}
Elementary real analysis tells us that if $a_{n+1} \le b a_n + c$, where $a_i (i=0,1,...,n+1), b, c>0$, then $a_{n+1} \le b^{n+1}a_0 + c \frac{1-b^n}{1-b}$.  
We can apply this result to \eqref{bound10}, yielding
\begin{align*}
\| \nabla u_{k+1} \|^2 & \le (4 \alpha_{\delta}^2)^{k+1} \| \nabla u_0 \|^2 + 2\nu^{-2} \| f \|_{-1}^2 (1 + (1+\delta)^2 ) \frac{ 1 - (4 \alpha_{\delta}^2)^k }{1 - 4 \alpha_{\delta}^2}.
\end{align*}
Since $\alpha_{\delta}<\frac13$, then for $k$ sufficiently large, we obtain
\[
\| \nabla u_{k+1} \| \le 2\nu^{-1} \| f \|_{-1} \sqrt{ 1 + (1+\delta)^2 }.
\]
This completes the proof.

\end{proof}

\subsection{Convergence of the CDA-Picard with noisy data iteration residuals}

We now discuss convergence of the CDA-Picard iteration with noisy data.  We are able to essentially mimic the proofs of CDA-Picard for accurate data, to prove that the residuals of the method for noisy data converges.  Note that we can write $u_{k+1}-u_k = g_{picard} (u_k)-u_k$ where $g_{picard} $ is the solution operator of the linear problem \eqref{eq:pi_grad_div} at step $k$ (the stability bounds above are sufficient to show $g_{picard}$ is well defined), and hence the residuals of the iteration can be measured with the difference in successive iterates.  Of course, since there is noise in the data, the limit solution of this iteration will not be particularly meaningful.  Still, if one is going to run an iteration, then it is helpful to know what is the expected behavior of that iteration.

We first prove a convergence result for the residual, under a smallness condition on the PDE parameters, but for any $H>0$ or $\mu\ge 0$.  Then we prove a general convergence result with no smallness assumptions on PDE parameters, but with $H$ small enough and $\mu$ large enough.  Since these results follow very similarly to the convergence results for the case of no noise from \cite{LHRV23} (once two successive iterations are subtracted, the noise term drops), we omit the proofs here but move them to the appendix for interested readers.

\begin{thm}[Convergence of the residuals for small PDE parameters and noise but any $H$ or $\mu$] \label{rescon1}
Suppose $\alpha_{\delta}<\frac13$.  Then for sufficiently large $k$ and any $H$ and $\mu$, we have the bound
\[
\| \nabla (u_{k+1} - u_k ) \|  \le  C_{\delta}  \alpha   \| \nabla( u_k - u_{k-1}) \|.
\]
\end{thm}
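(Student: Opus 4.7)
The plan is to exploit the fact that the noise term $\mu I_H\epsilon$ is independent of the iteration index $k$, so that when we subtract the equations satisfied by two successive iterates, both the forcing $(f,v)$ and the entire data/noise term $\mu(I_H(u+\epsilon), I_H v)$ cancel exactly. This reduces the argument to essentially the same structure as the accurate-data case from \cite{LHRV23}, with the only new ingredient being that the stability bound on $\|\nabla u_k\|$ must be taken from Lemma \ref{lem2} instead of the exact NSE bound \eqref{Pri}.

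First I would write \eqref{eq:pi_grad_div} at steps $k+1$ and $k$ and subtract, obtaining for every $v\in V$
\[
\nu(\nabla(u_{k+1}-u_k),\nabla v) + b(u_k,u_{k+1},v) - b(u_{k-1},u_k,v) + \mu (I_H(u_{k+1}-u_k), I_H v) = 0.
\]
Next, I would rewrite the trilinear difference using the standard telescoping identity
\[
b(u_k,u_{k+1},v) - b(u_{k-1},u_k,v) = b(u_k,u_{k+1}-u_k,v) + b(u_k - u_{k-1}, u_k, v),
\]
and test with $v = u_{k+1} - u_k$. The first of these trilinear pieces vanishes by the skew-symmetry property $b(w,z,z)=0$, leaving
\[
\nu \|\nabla(u_{k+1}-u_k)\|^2 + \mu \|I_H(u_{k+1}-u_k)\|^2 = -b(u_k-u_{k-1}, u_k, u_{k+1}-u_k).
\]
Then I would bound the right-hand side by \eqref{bbound2}, drop the nonnegative nudging term on the left, and divide by $\nu \|\nabla(u_{k+1}-u_k)\|$ to reach
\[
\|\nabla(u_{k+1}-u_k)\| \le M\nu^{-1}\|\nabla u_k\|\,\|\nabla(u_k - u_{k-1})\|.
\]

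The final step invokes the noisy-data stability bound. Since $\alpha_\delta<\tfrac13$, Lemma \ref{lem2} guarantees that for $k$ sufficiently large $\|\nabla u_k\| \le C_\delta \nu^{-1}\|f\|_{-1}$, so
\[
M\nu^{-1}\|\nabla u_k\| \le C_\delta \, M\nu^{-2}\|f\|_{-1} = C_\delta\, \alpha,
\]
which gives the stated contraction. The only real obstacle is ensuring that the iterates remain bounded (otherwise the contraction constant could blow up), and this is exactly what Lemma \ref{lem2} delivers; everything else is an algebraic mimicry of the noise-free Picard convergence argument, aided by the fortunate fact that the noise cancels upon subtraction of successive equations.
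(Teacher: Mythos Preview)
Your proposal is correct and follows essentially the same approach as the paper's proof: subtract successive iterations so that both $(f,v)$ and the noisy data term cancel, test with $u_{k+1}-u_k$, use skew-symmetry to reduce the trilinear difference to $-b(u_k-u_{k-1},u_k,u_{k+1}-u_k)$, apply \eqref{bbound2}, drop the nonnegative nudging term, and invoke Lemma~\ref{lem2} to replace $M\nu^{-1}\|\nabla u_k\|$ by $C_\delta\alpha$. Your write-up is in fact slightly more explicit than the paper's about the telescoping identity and the role of the noise cancellation.
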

\begin{remark}
For PDE parameters and noise sufficiently small so that $C_{\delta}  \alpha = 2\sqrt{1+ (1+\delta)^2 }\alpha <1$, the iteration will converge (asymptotically) linearly.  
\end{remark}

We now state a result for residual convergence for any data and noise size, but with $H$ small enough and $\mu$ large enough.  Recall the definition of $\hat \lambda$ in \eqref{lambdahat}.

\begin{thm}[Convergence of the residuals for general PDE parameters and noise, provided $H$ is small enough and $\mu$ is large enough]\label{rescon2}
Suppose $\mu$ is chosen so that $\mu\ge \frac{\nu}{2C_I^2 H^2}$ and that $H$ is small enough so that $\rho = 2C_I C_{\delta}^2 \alpha^2 H <1$.  Then
\begin{align*}
\frac{\nu}{4} \| \nabla (u_{k+1} - u_k ) \|^2 + \hat \lambda \| u_{k+1} - u_k \|^2 
\le
2C_I C_{\delta}^2 \alpha^2 H \left( \frac{\nu}{4} \| \nabla( u_k - u_{k-1}) \|^2 +  \hat \lambda \| u_k - u_{k-1} \|^2 \right).
\end{align*}
\end{thm}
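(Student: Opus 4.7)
The plan is to mimic the standard CDA-Picard convergence argument but applied to consecutive iterates instead of to the error, exploiting the fact that the (unknown) noisy data term $\mu I_H(u+\epsilon)$ appears identically in two consecutive instances of \eqref{eq:pi_grad_div} and therefore cancels upon subtraction. Writing \eqref{eq:pi_grad_div} at indices $k+1$ and $k$ and subtracting, with $d_{k+1}:=u_{k+1}-u_k$ and $d_k:=u_k-u_{k-1}$, the residual equation becomes
\[
\nu(\nabla d_{k+1},\nabla v)+b(u_k,d_{k+1},v)+b(d_k,u_k,v)+\mu(I_Hd_{k+1},I_H v)=0,
\]
after using the splitting $b(u_k,u_{k+1},v)-b(u_{k-1},u_k,v)=b(u_k,d_{k+1},v)+b(d_k,u_k,v)$. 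Testing with $v=d_{k+1}$ kills $b(u_k,d_{k+1},d_{k+1})$ by skew-symmetry and leaves the clean identity $\nu\|\nabla d_{k+1}\|^2+\mu\|I_Hd_{k+1}\|^2=-b(d_k,u_k,d_{k+1})$.

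Next I would produce a lower bound for the left-hand side of the form $\tfrac{\nu}{4}\|\nabla d_{k+1}\|^2+\hat\lambda\|d_{k+1}\|^2$, reserving a $\tfrac{\nu}{4}\|\nabla d_{k+1}\|^2$ cushion to absorb the top-order piece arising from Young's inequality later. Splitting $\nu\|\nabla d_{k+1}\|^2=\tfrac{\nu}{2}\|\nabla d_{k+1}\|^2+\tfrac{\nu}{2}\|\nabla d_{k+1}\|^2$ and using \eqref{interpolationi} to pass the second piece to $\tfrac{\nu}{2C_I^2H^2}\|(I-I_H)d_{k+1}\|^2$, then invoking $\mu\ge \nu/(2C_I^2H^2)$ (so that $\hat\lambda=\nu/(4C_I^2H^2)$) together with $\|d_{k+1}\|^2\le 2\|I_Hd_{k+1}\|^2+2\|(I-I_H)d_{k+1}\|^2$, yields the needed lower bound with a leftover $\tfrac{\nu}{4}\|\nabla d_{k+1}\|^2$.

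For the right-hand side I would apply \eqref{bbound1} together with the asymptotic stability bound from Lemma \ref{lem2}, $\|\nabla u_k\|\le C_\delta\nu^{-1}\|f\|_{-1}$, valid for $k$ sufficiently large, giving
\[
|b(d_k,u_k,d_{k+1})|\le M\|d_k\|^{1/2}\|\nabla d_k\|^{1/2}\|\nabla u_k\|\|\nabla d_{k+1}\|\le C_\delta\alpha\nu\,\|d_k\|^{1/2}\|\nabla d_k\|^{1/2}\|\nabla d_{k+1}\|,
\]
after recognizing $M\nu^{-1}\|f\|_{-1}=\alpha\nu$. A standard Young's inequality then absorbs $\tfrac{\nu}{4}\|\nabla d_{k+1}\|^2$ into the left-hand cushion and produces
\[
\tfrac{\nu}{4}\|\nabla d_{k+1}\|^2+\hat\lambda\|d_{k+1}\|^2\le C_\delta^2\alpha^2\nu\,\|d_k\|\|\nabla d_k\|.
\]

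The main obstacle is closing this inequality in the target combined norm on the right with the correct single power of $H$. The trick is a weighted Young's inequality with the specific choice $\eta=C_IH$:
\[
\nu\|d_k\|\|\nabla d_k\|\le \tfrac{\nu C_IH}{2}\|\nabla d_k\|^2+\tfrac{\nu}{2C_IH}\|d_k\|^2=2C_IH\Bigl(\tfrac{\nu}{4}\|\nabla d_k\|^2+\tfrac{\nu}{4C_I^2H^2}\|d_k\|^2\Bigr).
\]
Under the hypothesis $\mu\ge \nu/(2C_I^2H^2)$ one has $\hat\lambda=\nu/(4C_I^2H^2)$, so the two coefficients on the right match $\tfrac{\nu}{4}$ and $\hat\lambda$ \emph{exactly}, producing the claimed contraction factor $2C_IC_\delta^2\alpha^2H=\rho$. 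This balancing is the delicate point: any other value of the Young weight would leave one of the two coefficients in the wrong ratio with the corresponding term of $\hat\lambda\|d_k\|^2+\tfrac{\nu}{4}\|\nabla d_k\|^2$, and the hypothesis $\mu\ge\nu/(2C_I^2H^2)$ is precisely what is needed to align them.
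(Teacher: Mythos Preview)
Your proposal is correct and follows essentially the same route as the paper's proof: subtract consecutive iterations to cancel the noisy data term, test with $d_{k+1}$, lower bound the left side via the interpolation estimate \eqref{interpolationi} to produce $\tfrac{\nu}{2}\|\nabla d_{k+1}\|^2+\hat\lambda\|d_{k+1}\|^2$, bound the trilinear term with \eqref{bbound1} and the stability bound of Lemma~\ref{lem2}, absorb $\tfrac{\nu}{4}\|\nabla d_{k+1}\|^2$ by Young, and then close with a weighted Young inequality. The only cosmetic difference is the final step: the paper applies Young with an abstract parameter $\rho$ and then substitutes $\rho=2C_IC_\delta^2\alpha^2H$ to verify $C_\delta^4\nu\alpha^4/\rho^2=\hat\lambda$, whereas you choose the weight $\eta=C_IH$ directly and observe that this makes both coefficients line up with $\tfrac{\nu}{4}$ and $\hat\lambda$; these are the same computation.
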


\begin{remark}
This result is similar to the case of convergence when no noise is present, and the noise effect is seen in the $C_{\delta}$ term.  This result implies that smaller $H$ and larger $\mu$ will help the iteration converge to a limit solution.  However, the limit solution will likely be inaccurate if the noise level is high.
\end{remark}

\subsection{Error analysis of the CDA-Picard with noisy data iteration}

We now consider error analysis of the CDA Picard iteration with noisy data.  Even though we have proved above that the iteration itself will converge (under appropriate assumptions on the data, noise, and CDA parameters), it is not expected that the limit solution will be accurate.  Hence we consider here the error itself, and find two results, which both show the error is bounded by the size of the noise.  We denote the error at step $k$ by $e_k:= u_k - u$, where $u$ is the NSE solution from which the measurement data were obtained.

Our first result shows that with enough measurement points and sufficiently small PDE parameters, an $L^2$ error estimate can be proven that depends on the size of the error but is independent of $\mu$.

\begin{thm} [Error at Step k] \label{thmerr1}
Let $0<r<1$.  Pick parameters $\mu$ and $H$ such that $ \frac{4\nu\alpha^4}{r^2} \le \mu \le \frac{\nu}{C_I^2 H^2}$ and $H\le \frac{r}{2C_I \alpha^2}$.  Then for $k$ sufficiently large, the error in step $k$ of the iteration satisfies
\[
\| e_k \| \le \sqrt{ \frac{4}{1-r} } \| I_H \epsilon \|.
\]

\end{thm}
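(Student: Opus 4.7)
My plan is to derive an energy-type recurrence for the combined quantity $F_k := \nu\|\nabla e_k\|^2 + \mu\|I_H e_k\|^2$, iterate it to obtain its asymptotic bound, and then convert the bound on $F_k$ into an $L^2$ estimate on $e_k$ via the interpolant inequality together with the hypothesis on $\mu$ and $H$.

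First, I would subtract the weak NSE \eqref{wd} from the iteration \eqref{eq:pi_grad_div} and test with $v=e_{k+1}:=u_{k+1}-u\in V$. Decomposing the trilinear difference as $b(u_k,u_{k+1},v)-b(u,u,v)=b(u_k,e_{k+1},v)+b(e_k,u,v)$ and using $b(u_k,e_{k+1},e_{k+1})=0$ by skew-symmetry yields
\[
\nu\|\nabla e_{k+1}\|^2 + b(e_k,u,e_{k+1}) + \mu\|I_H e_{k+1}\|^2 = \mu(I_H\epsilon, I_H e_{k+1}).
\]
Bounding $|b(e_k,u,e_{k+1})| \le M\|\nabla u\|\|\nabla e_k\|\|\nabla e_{k+1}\| \le \nu\alpha\|\nabla e_k\|\|\nabla e_{k+1}\|$ via \eqref{bbound2} and \eqref{Pri}, and applying Young's inequality on both the trilinear and noise terms to absorb $\tfrac{\nu}{2}\|\nabla e_{k+1}\|^2+\tfrac{\mu}{2}\|I_H e_{k+1}\|^2$ into the LHS, produces
\[
\frac{\nu}{2}\|\nabla e_{k+1}\|^2 + \frac{\mu}{2}\|I_H e_{k+1}\|^2 \le \frac{\nu\alpha^2}{2}\|\nabla e_k\|^2 + \frac{\mu}{2}\|I_H\epsilon\|^2.
\]
Multiplying by $2$ and using $\nu\alpha^2\|\nabla e_k\|^2 \le \alpha^2 F_k$ gives the recurrence $F_{k+1} \le \alpha^2 F_k + \mu\|I_H\epsilon\|^2$.

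Iterating this recurrence under the coupled hypotheses $\mu\ge 4\nu\alpha^4/r^2$ and $H\le r/(2C_I\alpha^2)$, which enforce that the effective contraction rate of the recursion is at most $r$, the asymptotic behavior for $k$ sufficiently large satisfies $F_k \le \mu\|I_H\epsilon\|^2/(1-r)$. To convert this into an $L^2$ bound on $\|e_k\|$, I would use $\|e_{k+1}\|^2 \le 2\|I_H e_{k+1}\|^2 + 2C_I^2 H^2\|\nabla e_{k+1}\|^2$ together with the hypothesis $\mu\le \nu/(C_I^2 H^2)$ (so that $C_I^2 H^2/\nu \le 1/\mu$), giving that each RHS term is bounded by $2F_{k+1}/\mu$ and hence $\|e_{k+1}\|^2\le 4F_{k+1}/\mu$. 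Combining with the asymptotic bound on $F_k$ yields
\[
\|e_k\|^2 \le \frac{4}{\mu}\cdot\frac{\mu\|I_H\epsilon\|^2}{1-r} = \frac{4}{1-r}\|I_H\epsilon\|^2,
\]
which is the claimed estimate.

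The main obstacle is justifying that the recursion $F_{k+1}\le\alpha^2 F_k + \mu\|I_H\epsilon\|^2$ contracts at effective rate $r$ rather than at its natural rate $\alpha^2$. This requires a careful accounting of the hypotheses $\mu\ge 4\nu\alpha^4/r^2$ and $H\le r/(2C_I\alpha^2)$, which encode the compatibility between the PDE parameter $\alpha$, the nudging strength $\mu$, and the coarse-mesh width $H$ that forces the contraction rate below $r$. Completing the passage to the asymptotic regime via "$k$ sufficiently large" may also require invoking the residual convergence in Theorem~\ref{rescon2} to ensure that the iterates approach the fixed point of the recurrence.
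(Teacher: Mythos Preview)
Your derivation of the error identity and the conversion from the energy $F_k$ to $\|e_k\|^2$ are fine, but there is a genuine gap at the heart of the argument: bounding the trilinear term via \eqref{bbound2}, i.e.\ $|b(e_k,u,e_{k+1})|\le \nu\alpha\|\nabla e_k\|\|\nabla e_{k+1}\|$, leads to a recursion $F_{k+1}\le \alpha^2 F_k + \mu\|I_H\epsilon\|^2$ whose contraction rate is exactly $\alpha^2$ and \emph{cannot} be improved to $r$ by any choice of $\mu$ or $H$.  The nudging parameters have disappeared from the rate altogether, because the right-hand side involves only $\|\nabla e_k\|$, and the hypotheses $\mu\ge 4\nu\alpha^4/r^2$ and $H\le r/(2C_I\alpha^2)$ have no leverage on a pure $H^1$ recursion.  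In particular your argument only converges if $\alpha<1$ (a smallness condition the theorem does not assume), and even then it gives $\|e_k\|^2\le \frac{4}{1-\alpha^2}\|I_H\epsilon\|^2$ rather than the stated bound with~$r$.  The paragraph you flag as ``the main obstacle'' is not a detail to be filled in but the whole point of the proof.

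The paper's fix is to use the mixed bound \eqref{bbound1} instead: $|b(e_k,u,e_{k+1})|\le \nu\alpha\,\|e_k\|^{1/2}\|\nabla e_k\|^{1/2}\|\nabla e_{k+1}\|$.  After Young's inequality this produces a term $\nu\alpha^2\|e_k\|\|\nabla e_k\|$, and a second Young with a free parameter $B=r/2$ splits it into $\tfrac{\nu}{4}\|\nabla e_k\|^2 + \tfrac{\nu\alpha^4}{r^2}\|e_k\|^2$.  The first piece matches the $H^1$ part of the left-hand side, while the second is dominated by $\lambda\|e_k\|^2$ with $\lambda=\min\{\mu/4,\nu/(4C_I^2H^2)\}$ precisely under the stated hypotheses on $\mu$ and~$H$.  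This is what makes the contraction rate equal to~$r$ regardless of~$\alpha$: the $L^2$ factor $\|e_k\|$ coming from \eqref{bbound1} is exactly what the nudging term can control.
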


\begin{proof}
We begin by subtracting the scheme from the NSE (the solution $u$ from which the measurements were taken), and then testing with $e_{k+1}$, to obtain
\begin{equation}
\nu \| \nabla e_{k+1} \|^2 + \mu \| I_H e_{k+1} \|^2 = - b(e_k,u,e_{k+1}) - \mu(I_H \epsilon,I_H e_{k+1}). \label{n0}
\end{equation}
Bounding the last right hand side term with Cauchy-Schwarz and Young, we get that
\[
- \mu(I_H \epsilon,I_H e_{k+1}) \le \frac{\mu}{2} \| I_H \epsilon \|^2 + \frac{\mu}{2} \| I_H e_{k+1} \|^2,
\] 
and so
\begin{equation}
\nu \| \nabla e_{k+1} \|^2 + \frac{\mu}{2} \| I_H e_{k+1} \|^2 = - b(e_k,u,e_{k+1}) +  \frac{\mu}{2} \| I_H \epsilon \|^2. \label{n1}
\end{equation}

Proceeding similar to above, we can lower bound the left side via
\begin{align*}
\nu \| \nabla e_{k+1} \|^2 + \frac{\mu}{2} \| I_H e_{k+1} \|^2 & = \nu \| \nabla e_{k+1} \|^2 + \frac{\mu}{2} \| I_H e_{k+1} \|^2 \\
& = \frac{\nu}{2} \| \nabla e_{k+1} \|^2 + \frac{\nu}{2} \| \nabla e_{k+1} \|^2 + \frac{\mu}{2} \| I_H e_{k+1} \|^2 \\
& \ge \frac{\nu}{2} \| \nabla e_{k+1} \|^2 + \frac{\nu}{2 C_I^2 H^2} \| e_{k+1} - I_H e_{k+1} \|^2 + \frac{\mu}{2} \| I_H e_{k+1} \|^2,
\end{align*}
thanks to the property \eqref{interpolationi}.  Setting 
\[
\lambda = \min \bigg\{ \frac{\mu}{4}, \frac{\nu}{4C_I^2 H^2} \bigg\},
\]
we use the triangle inequality to get the bound
\begin{align*}
\nu \| \nabla e_{k+1} \|^2 + \frac{\mu}{2} \| I_H e_{k+1} \|^2 & \ge \frac{\nu}{2} \| \nabla e_{k+1} \|^2 + \lambda \| e_{k+1} \|^2.
\end{align*}
Using this in \eqref{n1} and then bounding the right hand side of \eqref{n1} using \eqref{bbound1} to get
\begin{align*}
\frac{\nu}{2} \| \nabla e_{k+1} \|^2 + \lambda \| e_{k+1} \|^2 
& \le  - b(e_k,u,e_{k+1})+ \frac{\mu}{2} \| I_H \epsilon \|^2. \\
& \le M \| e_k \|^{1/2} \| \nabla e_k \|^{1/2} \| \nabla u \| \| \nabla e_{k+1} \| + \frac{\mu}{2} \| I_H \epsilon \|^2 \\
& \le \nu \alpha \| e_k \|^{1/2} \| \nabla e_k \|^{1/2}  \| \nabla e_{k+1} \| + \frac{\mu}{2} \| I_H \epsilon \|^2 \\
& \le \frac{\nu}{4}\| \nabla e_{k+1} \|^2 + \nu \alpha^2 \| e_k \| \| \nabla e_k \|  + \frac{\mu}{2} \| I_H \epsilon \|^2.
\end{align*}
Applying Cauchy-Schwarz on the middle right hand side term and reducing yields for any $B>0$ that
\begin{align*}
\frac{\nu}{4} \| \nabla e_{k+1} \|^2 + \lambda \| e_{k+1} \|^2 
& \le \nu \frac{B}{2} \| \nabla e_k \|^2 + \frac{\nu \alpha^4}{2B}  \| e_k \|^2 + \frac{\mu}{2} \| I_H \epsilon \|^2\\
& \le r \left( \nu \frac{B}{2r} \| \nabla e_k \|^2 + \frac{\nu \alpha^4}{2B r}  \| e_k \|^2 \right) + \frac{\mu}{2} \| I_H \epsilon \|^2,
\end{align*}
where $0<r<1$.  Setting $B = \frac{r}{2}$ and noting the assumptions on $\mu$ and $H$ from the theorem statement imply
that 
\[
 \frac{\nu \alpha^4}{r^2} \le  \lambda =  \min \bigg\{ \frac{\mu}{4}, \frac{\nu}{4C_I^2 H^2} \bigg\},
\]
and thus since $r<1$
\begin{align*}
\frac{\nu}{4} \| \nabla e_{k+1} \|^2 + \lambda \| e_{k+1} \|^2  & \le r \left( \frac{\nu}{4} \| \nabla e_{k} \|^2 + \lambda \| e_{k} \|^2  \right) + \frac{\mu}{2} \| I_H \epsilon \|^2 \\
& \le  r^2 \left( \frac{\nu}{4} \| \nabla e_{k-1} \|^2 + \lambda \| e_{k-1} \|^2  \right) + (1+r) \frac{\mu}{2} \| I_H \epsilon \|^2 \\
& \le r^{k+1} \left( \frac{\nu}{4} \| \nabla e_0 \|^2 + \lambda \| e_0 \|^2  \right) + \frac{\mu}{2 (1-r) } \| I_H \epsilon \|^2. 
\end{align*}
Thus for $k$ sufficiently large such that
$$
r^{k+1} \left( \frac{\nu}{4} \| \nabla e_0 \|^2 + \lambda \| e_0 \|^2  \right)\le \frac{\mu}{2 (1-r) } \| I_H \epsilon \|^2,
$$
and using the assumption that $\mu\le \frac{\nu}{C_I^2 H^2}$,
\[
\frac{\mu}{4} \| e_{k+1} \|^2 = \min \bigg\{ \frac{\mu}{4}, \frac{\nu}{4C_I^2 H^2} \bigg\} \| e_k \|^2 = \lambda \| e_k \|^2 \le \frac{\mu}{1-r} \| I_H \epsilon \|^2,
\]
which reduces to 
\[
\| e_k \| \le \sqrt{ \frac{4}{1-r} } \| I_H \epsilon \|,
 \]
 finishing the proof.
 \end{proof}

The theorem above proved an error bound under (essentially) a smallness condition on the PDE parameters.  The theorem below proves an error bound for any data, and does not require the NSE solution be unique.  However, we do require a single NSE solution is used for measurement data.  We denote
\begin{equation}\label{gama}
\bar \lambda:=\min\left\{\frac{\nu}{ C_I^{2}H^{2}},\frac{\mu}{2}\right\}
\end{equation}
in the theorem.

\begin{thm} [Error at Step k for general PDE parameters] \label{thmerr2}
Let $u$ be a steady NSE solution and $I_H(u)$ be its interpolant on $X_H$.  Pick parameters $\mu$ and $H$ such that $\bar \lambda > 2$, $H\le \frac{\nu^{1/2}}{\sqrt{2} K_1 C_I}$, and $\mu \ge 4K_1^2$.  Then it holds that
\begin{equation}\label{cota1}
\| e_k\|^2 \le  \left(\frac{2}{\bar \lambda}\right)^{k}\| e_0 \|^2+\frac{2 \mu }{\bar \lambda-2} \|I_H\epsilon\|^2.
\end{equation}
Moreover, if $\mu\le \frac{\nu}{C_I^2H^2}$ (equivalently $H\le \frac{\sqrt{2}\nu^{1/2}}{ \mu^{1/2}C_I}$)  then
\begin{equation}\label{cota2}
\| e_k\|^2 \le  \left(\frac{2}{\bar \lambda}\right)^{k}\| e_0 \|^2+\frac{4 \mu }{\mu-4} \|I_H\epsilon\|^2.
\end{equation}
\end{thm}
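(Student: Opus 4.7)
The plan is to mimic the structure of the proof of Theorem 3.1 while carefully tracking the contribution of the noise term. Begin by subtracting the CDA-Picard scheme \eqref{eq:pi_grad_div} from the weak NSE \eqref{wd}, testing the resulting equation with $v=e_{k+1}$. The trilinear contribution $b(u_k,e_{k+1},e_{k+1})$ vanishes by skew-symmetry, producing the error identity
\[
\nu \|\nabla e_{k+1}\|^2 + \mu \|I_H e_{k+1}\|^2 = -b(e_k, u, e_{k+1}) - \mu (I_H\epsilon, I_H e_{k+1}).
\]
Cauchy--Schwarz and Young handle the noise pairing via $\mu (I_H\epsilon, I_H e_{k+1}) \le \tfrac{\mu}{2}\|I_H\epsilon\|^2 + \tfrac{\mu}{2}\|I_H e_{k+1}\|^2$, and the second piece is absorbed into the left side. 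For the nonlinear term, use the $L^2$--$L^\infty$--$L^2$ Hölder bound together with $\|\nabla u\|_\infty \le K_1$ to get $|b(e_k,u,e_{k+1})| \le K_1 \|e_k\|\|e_{k+1}\|$.

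The critical departure from Theorem 3.1 is the Young split on the nonlinear term. Instead of the symmetric split (which would require a condition like $\mu \gtrsim K_1$), apply Young with scaling $\bar\lambda/4$, namely
\[
K_1\|e_k\|\|e_{k+1}\| \le \frac{K_1^2}{\bar\lambda}\|e_k\|^2 + \frac{\bar\lambda}{4}\|e_{k+1}\|^2.
\]
Next, lower-bound the left-hand side by $\|e_{k+1}\|^2$: invoking the interpolation estimate \eqref{interpolationi}, $\nu\|\nabla e_{k+1}\|^2 \ge \tfrac{\nu}{C_I^2 H^2}\|(I-I_H) e_{k+1}\|^2$, so combining with $\tfrac{\mu}{2}\|I_H e_{k+1}\|^2$ and the definition \eqref{gama} of $\bar\lambda$ gives
\[
\nu \|\nabla e_{k+1}\|^2 + \frac{\mu}{2}\|I_H e_{k+1}\|^2 \ge \bar\lambda\bigl(\|(I-I_H) e_{k+1}\|^2 + \|I_H e_{k+1}\|^2\bigr) \ge \frac{\bar\lambda}{2}\|e_{k+1}\|^2,
\]
using $\|v\|^2 \le 2\|I_H v\|^2 + 2\|(I-I_H) v\|^2$. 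Assembling all bounds produces
\[
\frac{\bar\lambda}{4}\|e_{k+1}\|^2 \le \frac{K_1^2}{\bar\lambda}\|e_k\|^2 + \frac{\mu}{2}\|I_H\epsilon\|^2.
\]

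At this stage the stated hypotheses on $\mu$ and $H$ are precisely calibrated to force $\bar\lambda \ge 2 K_1^2$ on both branches of the minimum: $\mu \ge 4 K_1^2$ handles $\mu/2 \ge 2K_1^2$, while $H \le \nu^{1/2}/(\sqrt{2}\, K_1 C_I)$ handles $\nu/(C_I^2 H^2) \ge 2K_1^2$. Consequently $\tfrac{4K_1^2}{\bar\lambda^2} \le \tfrac{2}{\bar\lambda}$, and multiplying the previous inequality by $4/\bar\lambda$ yields the one-step recurrence
\[
\|e_{k+1}\|^2 \le \frac{2}{\bar\lambda}\|e_k\|^2 + \frac{2\mu}{\bar\lambda}\|I_H\epsilon\|^2.
\]
Since $\bar\lambda > 2$ by hypothesis, the ratio $2/\bar\lambda<1$, and iterating via the standard geometric-sum estimate $a_{n+1}\le \rho a_n + c \Rightarrow a_n \le \rho^n a_0 + c/(1-\rho)$ delivers \eqref{cota1}. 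For \eqref{cota2}, the additional hypothesis $\mu \le \nu/(C_I^2 H^2)$ forces $\bar\lambda = \mu/2$, so $\tfrac{2\mu}{\bar\lambda - 2} = \tfrac{4\mu}{\mu - 4}$, which substitutes directly into \eqref{cota1} to give the sharpened bound.

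The main obstacle is selecting the Young parameter on $b(e_k,u,e_{k+1})$ correctly. The symmetric split used in Theorem 3.1 was tuned to produce a contraction ratio of the form $O(1/\gamma)$ with $\gamma \propto 1/K_1$, but here we need a ratio of the form $O(1/\bar\lambda)$ with $\bar\lambda \propto \mu$, so the nonlinear term has to be weighted against $\bar\lambda$ rather than $K_1$. This forces the stronger-looking $\mu\ge 4K_1^2$ condition in exchange for a bound whose contraction ratio and noise-scaling constant both depend only on $\bar\lambda$, matching the form asserted in the theorem. Everything else is routine bookkeeping.
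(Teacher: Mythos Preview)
Your argument is correct and reaches the same one-step recurrence $\|e_{k+1}\|^2 \le \tfrac{2}{\bar\lambda}\|e_k\|^2 + \tfrac{2\mu}{\bar\lambda}\|I_H\epsilon\|^2$ as the paper, after which the iteration and the specialization to \eqref{cota2} are identical. The route differs only in how the $\|e_{k+1}\|^2$ term coming from Young's inequality is absorbed. The paper applies Young as $K_1\|e_k\|\,\|e_{k+1}\| \le \tfrac{1}{2}K_1^2\|e_{k+1}\|^2 + \tfrac{1}{2}\|e_k\|^2$ and then, following the \cite{GNT18} device, splits $\|e_{k+1}\|^2 \le 2\|I_He_{k+1}\|^2 + 2\|(I-I_H)e_{k+1}\|^2$ and uses the hypotheses $\mu\ge 4K_1^2$ and $H\le \nu^{1/2}/(\sqrt{2}K_1C_I)$ to absorb each piece separately into the $\mu$- and $\nu$-terms on the left, leaving $\tfrac{\nu}{2}\|\nabla e_{k+1}\|^2 + \tfrac{\mu}{4}\|I_He_{k+1}\|^2$ before lower-bounding. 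You instead lower-bound the left side first to $\tfrac{\bar\lambda}{2}\|e_{k+1}\|^2$ and then choose the Young weight $\bar\lambda/4$ so that absorption is immediate, invoking the same hypotheses at the end in the equivalent form $\bar\lambda \ge 2K_1^2$. Your version is a bit more streamlined; the paper's version has the advantage of paralleling the structure of the no-noise proof (Theorem~\ref{Pcac2}) line by line.
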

\begin{remark}
Hence we observe linear convergence of the error in the $L^2$ norm up to $\left( \frac{2 \mu }{\bar \lambda-2}\right)^{1/2} \|I_H\epsilon\|$, provided $\mu$ is large enough and $H$ is small enough to satisfy the assumptions.  Compared to Theorem \ref{thmerr1}, the improvement of this theorem is that there is no restriction on PDE parameters $Re$, $f$, and $M$. The tradeoff is that the term representing the noise is scaled by $\mu$, which depends on $\| \nabla u \|_{L^{\infty}}^2$, see \eqref{cota1}.
However, assuming $\mu\le \frac{\nu}{C_I^2H^2}$,  as in Theorem \ref{thmerr1}, or
equivalently $H$  small enough, $H\le \frac{\sqrt{2}\nu^{1/2}}{ \mu^{1/2}C_I}$, we get \eqref{cota2} in which the constant $\frac{4 \mu }{\mu-4}$
is bounded as $\mu\rightarrow \infty$.
\end{remark}

\begin{proof}
We begin the proof the same was as for the proof of Theorem \ref{thmerr1} and thus begin with the error equation
\[
\nu \|\nabla \be_{k+1}\|^2+\mu \|I_H \be_{k+1}\|^2=-b(\be_k,\bu,\be_{k+1})-\mu (I_H\epsilon,I_H\be_{k+1}).
\]
Applying Young's inequality on the last right hand side term (just as in Theorem \ref{thmerr1}, but now estimating the nonlinear
term using H\"older's inequality with $L^2-L^{\infty}-L^2$) gives
\begin{eqnarray*}
\nu \|\nabla \be_{k+1}\|^2+\frac{\mu}{2} \|I_H \be_{k+1}\|^2&\le& \|\be_k\|\|\nabla \bu\|_{\infty}\|\be_{k+1}\|
+\frac{\mu}{2} \|I_H \epsilon\|^2\\
&\le& \frac{1}{2}\|\nabla \bu\|_{\infty}^2\|\be_{k+1}\|^2+\frac{1}{2}\|\be_{k}\|^2+\frac{\mu}{2} \|I_H \epsilon\|^2\\
&\le& \frac{1}{2}K_1^2\|\be_{k+1}\|^2+\frac{1}{2}\|\be_{k}\|^2+\frac{\mu}{2} \|I_H \epsilon\|^2.
\end{eqnarray*}
with the last step thanks to Young's inequality.  Denoting $L:=\frac{1}{2}K_1^2$ and  arguing as in \cite{GNT18} we can write
\begin{equation}
L\|\be_{k+1}\|^2\le 2L \|I_H\be_{k+1}\|^2+2L\|(I-I_H)\be_{k+1}\|^2. \label{tri}
\end{equation}
Since $\mu\ge 4K_1^2$ is assumed, we have that $\mu\ge 8L$ and so $\mu/2-2L \ge \mu/4$ and thus
\begin{eqnarray*}
\nu \|\nabla \be_{k+1}\|^2+\frac{\mu}{4}\|I_H \be_{k+1}\|^2-2L\|(I-I_H)\be_{k+1}\|^2\le \frac{1}{2}\|\be_{k}\|^2
+\frac{\mu}{2} \|I_H \epsilon\|^2.
\end{eqnarray*}
Now using \eqref{interpolationi} and the assumption on $H$ we get that
\[
\nu \|\nabla \be_{k+1}\|^2-2L\|(I-I_H)\be_{k+1}\|^2\ge \nu \|\nabla \be_{k+1}\|^2-2LC_I^2H^2\|\nabla \be_{k+1}\|^2
\ge \frac{\nu}{2}\|\nabla \be_{k+1}\|^2,
\]
which combines with the previous inequality to yield
\begin{eqnarray*}
{\nu} \|\nabla \be_{k+1}\|^2+\frac{\mu }{2}\|I_H \be_{k+1}\|^2\le \|\be_{k}\|^2+{\mu}\|I_H \epsilon\|^2.
\end{eqnarray*}
Applying \eqref{interpolationi} again to lower bound the left hand side (similar to in the proofs above) gives us that
\begin{eqnarray*}
\nu C_I^{-2}H^{-2} \|(I-I_H)\be_{k+1}\|^2+\frac{\mu }{2}\|I_H \be_{k+1}\|^2&\le&{\nu} \|\nabla \be_{k+1}\|^2+\frac{\mu }{2}\|I_H \be_{k+1}\|^2\\
&\le& \|\be_{k}\|^2+\mu \|I_H \epsilon\|^2.
\end{eqnarray*}

Next, using the definition of  $\bar \lambda$ and  \eqref{tri} (after dividing both sides by $2L$) yields
$$
\frac{\bar \lambda}{2}\|\be_{k+1}\|^2\le \|\be_k\|^2+\mu \|I_H \epsilon\|^2,
$$
so that
$$
\|\bu_{k+1}-\bu\|^2\le \frac{{2}}{{\bar \lambda}} \|\bu_k-\bu\|^2+\frac{{2\mu}}{{\bar \lambda}} \|I_H \epsilon\|^2.
$$
Denoting $r=\frac{{2}}{{\bar \lambda}} <1,$ (by assumption of the theorem) we can step the iteration backward and obtain the bound
\begin{eqnarray*}
\|\bu_{k+1}-\bu\|^2&\le& r^{k+1}\|\bu_0-\bu\|^2+(1+r+\ldots+r^k)\mu r\|I_H\epsilon\|^2\\
&\le& r^{k+1}\|\bu_0-\bu\|^2+\frac{\mu r}{1-r} \|I_H\epsilon\|^2 \\
& \le & \left(\frac{2}{\bar \lambda}\right)^{k+1}\|\bu_0-\bu\|^2+\frac{2 \mu }{\bar\lambda-2} \|I_H\epsilon\|^2. 
\end{eqnarray*}
This proves \eqref{cota1}. In case $\mu\le \frac{\nu}{C_I^2H^2}$ then $\bar\lambda=\mu/2$ which gives \eqref{cota2}.
\end{proof}

\section{Numerical Experiments}

In this section we illustrate the above theory and the effectiveness of the proposed method on the 2D driven cavity at $Re$=3000 and 10000, and the 3D driven cavity at $Re$=200 and 1000.  CDA-Picard $L^2$ error is shown in all tests to converge to approximately the level of the signal to noise ratio  while its $L^2$ residual (the difference between iterates) converges linearly to 0.  In addition, we propose and test the strategy of using CDA-Picard with noisy data to create an initial guess for usual Newton.   This is shown to be a very effective solver for these test problems.

\subsection{2D driven cavity with varying signal to noise ratio}

\begin{figure}[ht]
\center
Re=3,000 \hspace{1in}  Re=10,000 \\
\includegraphics[width = .3\textwidth, height=.28\textwidth,viewport=115 45 465 390, clip]{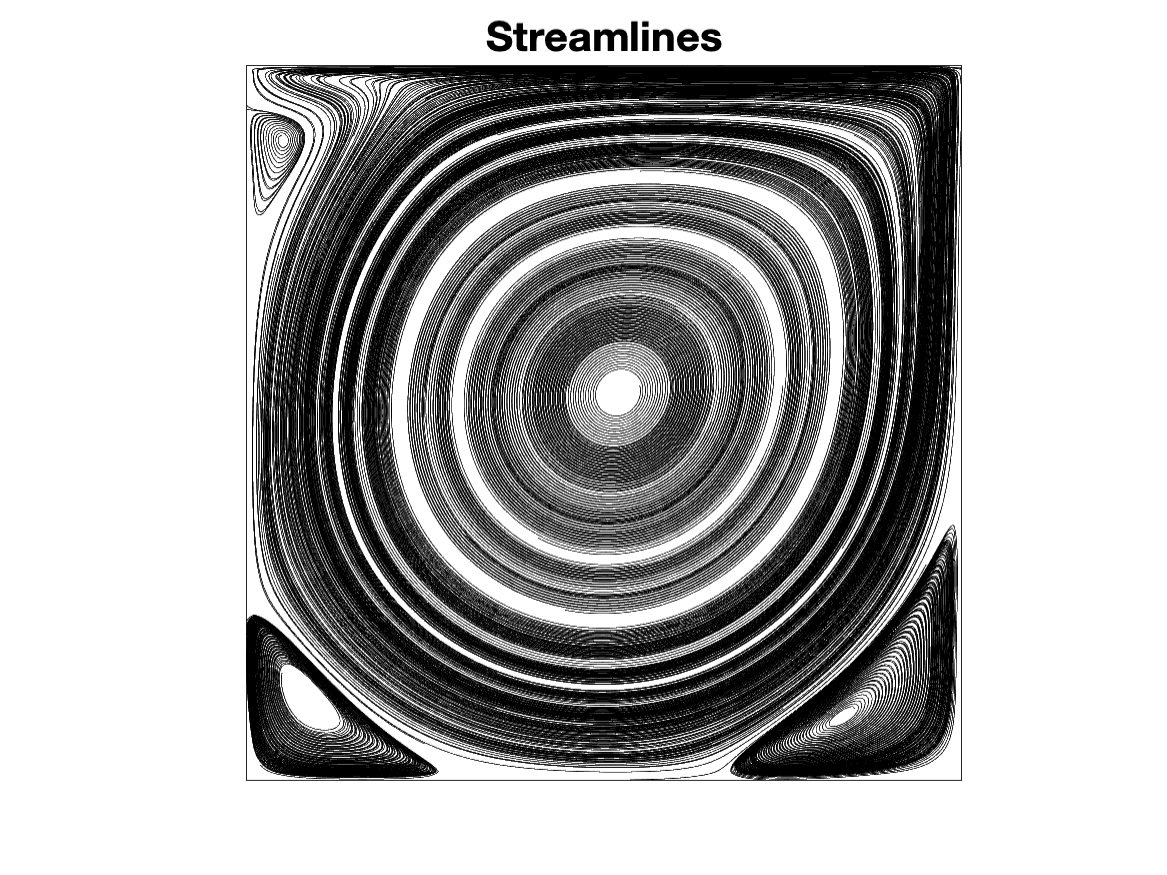}  
\includegraphics[width = .3\textwidth, height=.28\textwidth,viewport=115 45 465 390, clip]{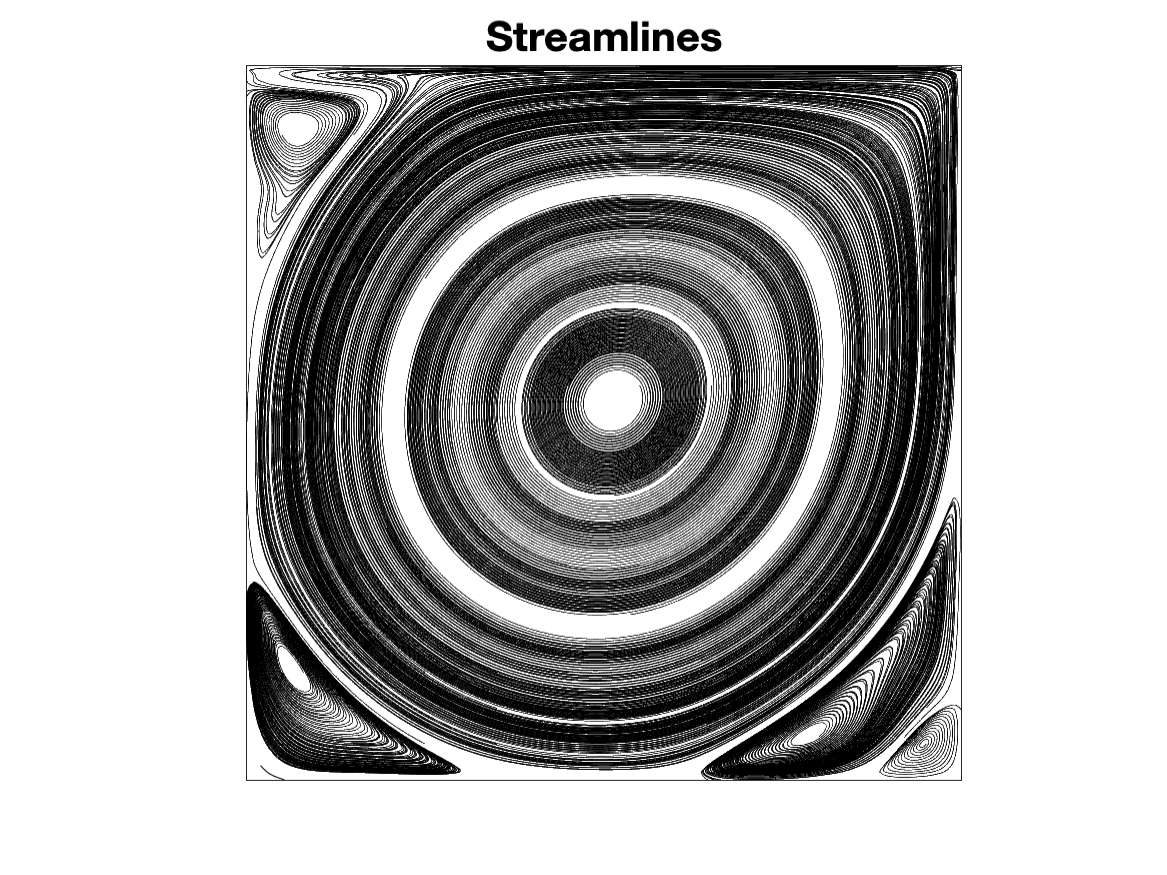}  
\caption{\label{nseplot1} The plot above shows streamlines of the solution of the 2D driven cavity problem at $Re=$3,000 (left) and 10,000 (right).}
\end{figure}

For our first test we consider the CDA-Picard method for the NSE applied to the benchmark 2D driven cavity problem on $\Omega=(0,1)^2$.  Boundary conditions are zero on the sides and bottom, and on the top we enforce $u=\langle 1,0 \rangle^T$ which models a lid moving horizontally with unit velocity.  No external forcing is used in this problem ($f=0$) and the viscosity $\nu$ will be chosen as the inverse of the Reynolds number, and we test with $Re$=3000, and 10000 (the first of which is believed to satisfy the smallness condition $\alpha<1$ and the second of which does not \cite{PRX19}).  

Computations are performed using $(P_2,P_1^{disc})$ Scott-Vogelius elements on a barycenter refinement (also named the Alfeld split in the Guzman-Neilan vernacular) of a uniform $\frac{1}{64}$ triangulation that has southwest-northeast diagonals.  It is known from \cite{arnold:qin:scott:vogelius:2D} that this element choice is inf-sup stable on this type of structured mesh.  This discretization provides 98,818 total velocity degrees of freedom (dof) and 73,728 pressure dof.  For all of these tests, an initial iterate of $u_0=0$ is chosen.  Plots of the NSE solutions found on this discretization for these problems are shown in figure \ref{nseplot1}, and they agree well with existing literature \cite{bruneau:cavity,ECG05}.

We obtain noisy data measurements by first computing the NSE solution directly on the given discretization for each $Re$ (using the AA-Picard method from \cite{PRX19}).  Measurement points are taken to be the vertices of the mesh that are closest to the midpoints of an $N\times N$ uniform square grid of $\Omega$, where $N$=10 for $Re$=3000 and $N$=20 for $Re$=10000.  Signal to noise ratios (snr) were chosen to be 0.001, 0.01 and 0.05, and for each snr, random numbers were generated at each measurement point that come from a normal distribution with mean 0 and standard deviation $snr \times u_{max}$ ($u_{max}$ is the max velocity of the true solution, which is 1 for this test problem).  These noisy measurement data points were then used with CDA-Picard.  For $Re$=3000, 100 total measurement points were used and for $Re$=10000, 400 total measurement points were used (100 was not enough to yield convergence for $Re$=10000).

\begin{figure}[ht]
\center
\includegraphics[width = .32\textwidth, height=.3\textwidth,viewport=0 0 550 415, clip]{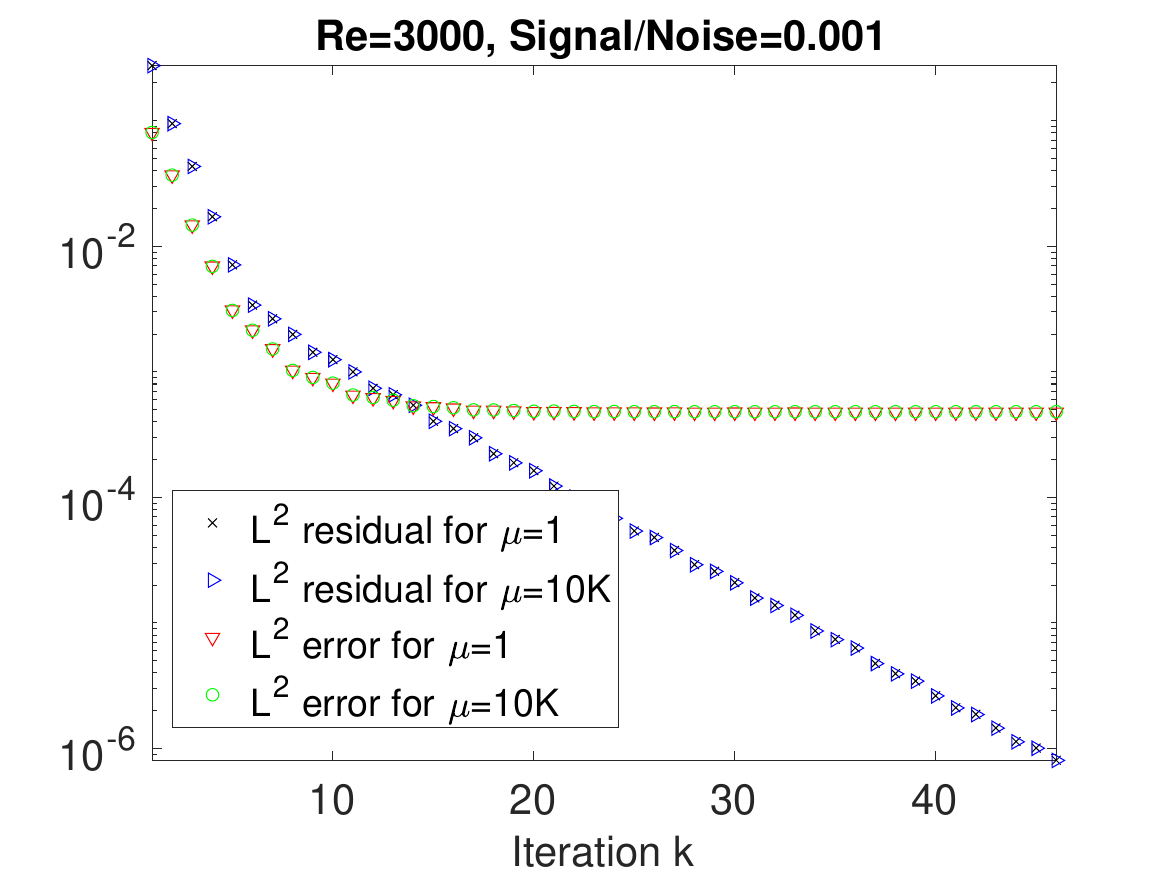}
\includegraphics[width = .32\textwidth, height=.3\textwidth,viewport=0 0 550 415, clip]{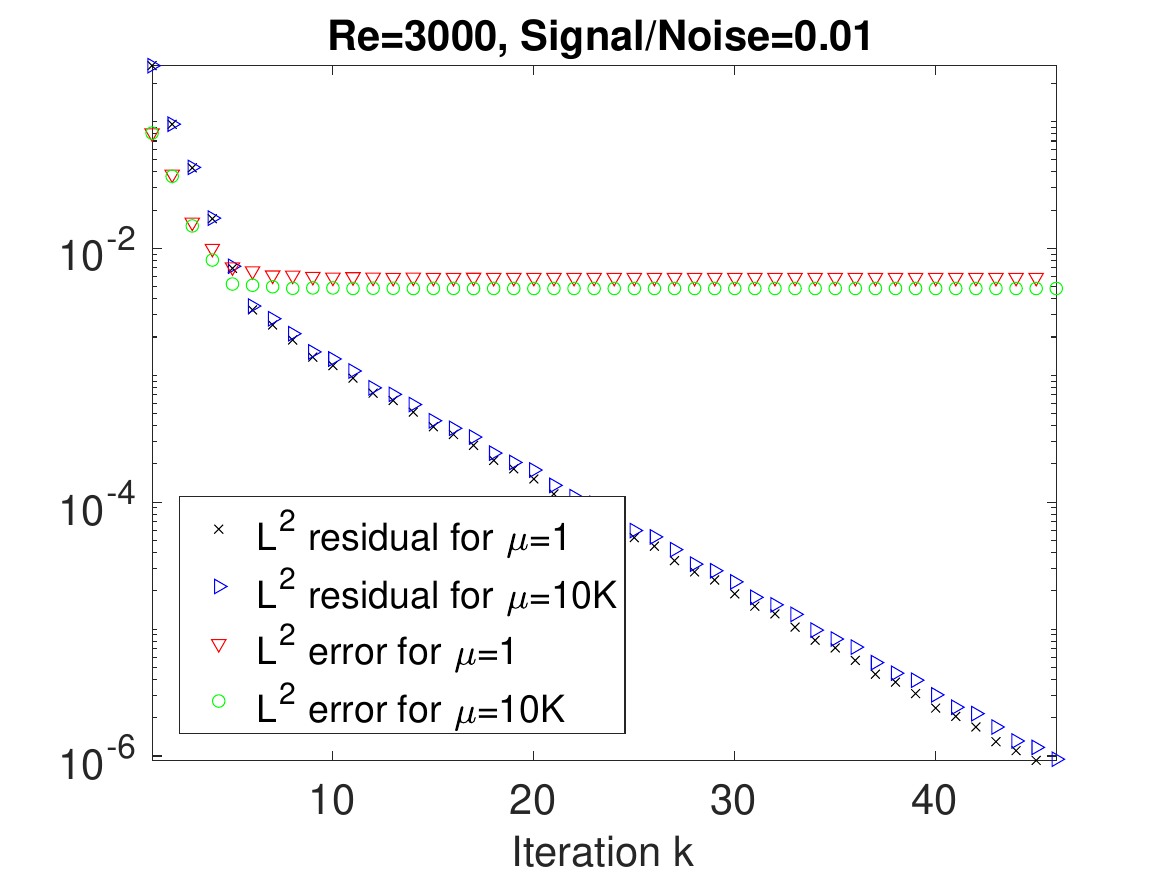}
\includegraphics[width = .32\textwidth, height=.3\textwidth,viewport=0 0 550 415, clip]{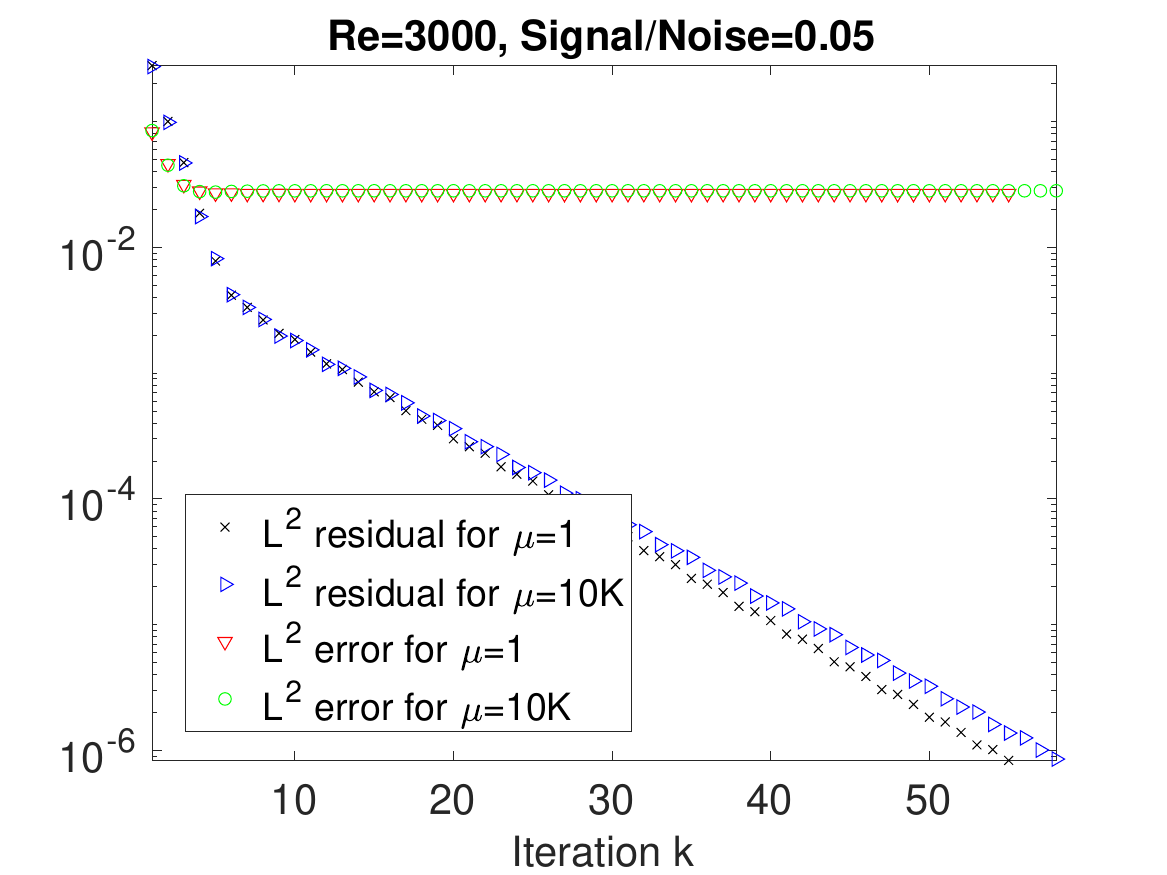}
\caption{\label{convplots3} Shown above are the error and residual plots for $Re$=3000 driven cavity tests with varying snr.}
\end{figure}

Convergence results for Re=3000 are shown in figure \ref{convplots3} for each choice of snr, and for both small and large nudging parameters ($\mu=1$ and $\mu=10,000$).  We observe that in all cases, the $L^2$ residuals converge approximately linearly, with little difference between large and small $\mu$ and with the total number of iterations to convergence increasing slightly as the snr increases.  The $L^2$ error plots, as expected, only converged up to approximately the level of the snr.

\begin{figure}[ht]
\center
\includegraphics[width = .32\textwidth, height=.3\textwidth,viewport=0 0 550 415, clip]{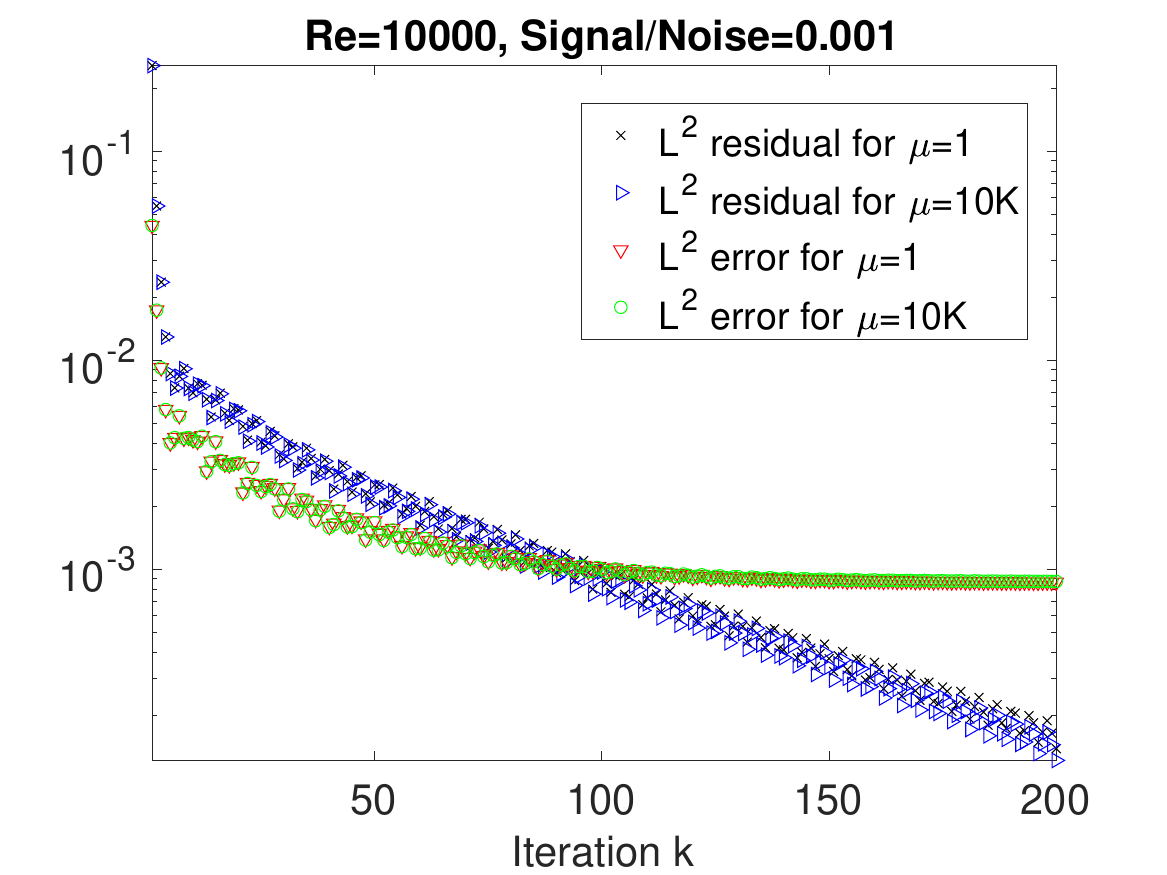}
\includegraphics[width = .32\textwidth, height=.3\textwidth,viewport=0 0 550 415, clip]{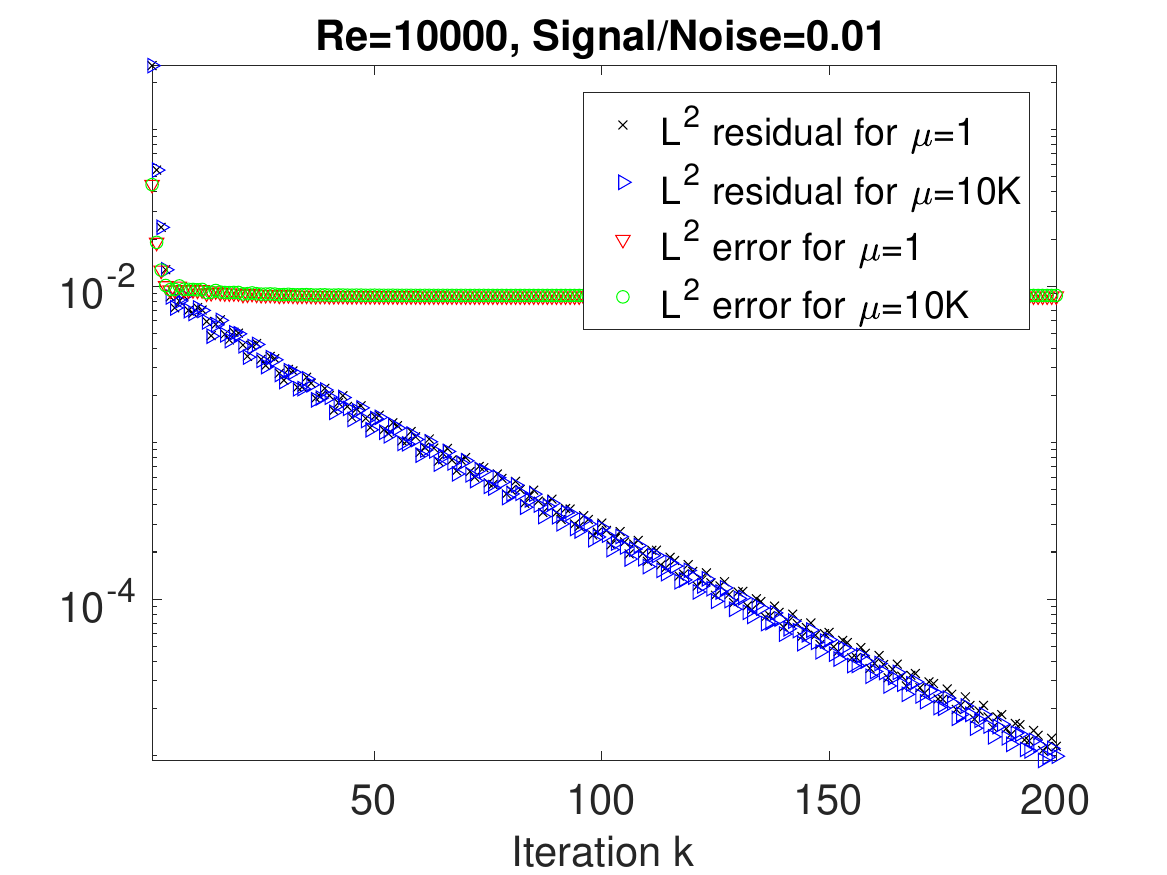}
\includegraphics[width = .32\textwidth, height=.3\textwidth,viewport=0 0 550 415, clip]{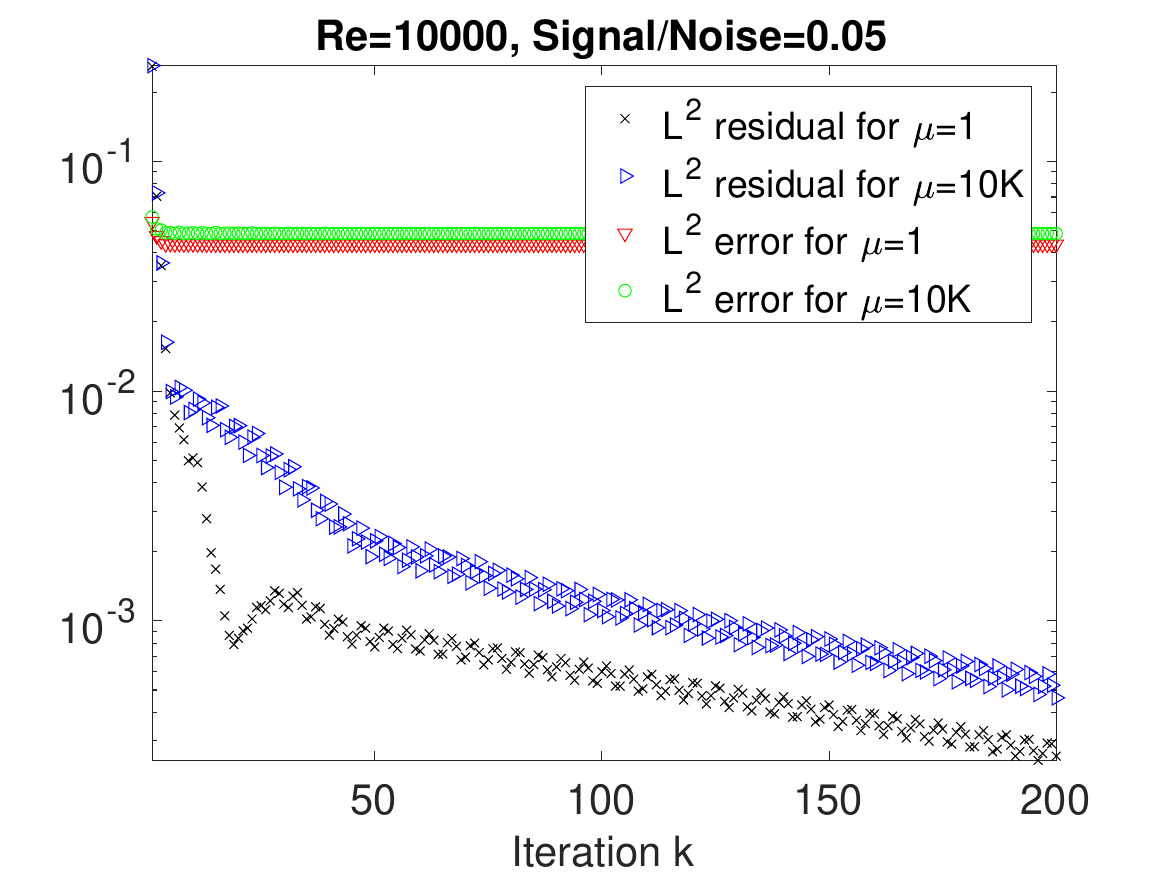}
\caption{\label{convplots10} Shown above are the error and residual plots for $Re$=10000 driven cavity tests with varying snr.}
\end{figure}

Results for Re=10000 are shown in figure \ref{convplots10}.  The $L^2$ error again converges approximately to the level of the snr.  The convergence of the $L^2$ residual shows somewhat different behavior than the Re=3000 case.  While as expected residual convergence is slower with higher Re, we observe better residual convergence for snr=0.01 than for 0.001, which is not expected.  We also observe for snr=0.05 that the larger nudging parameter gives somewhat worse behavior.  Of course, the interest in the $L^2$ residual is only useful in how it is associated with error.

\subsection{3D driven cavity with varying signal to noise ratio}

Our next test for CDA-Picard with noisy data uses the 3D lid-driven cavity benchmark problem.  This problem is the 3D analogue of the test problem above: the domain is the unit cube, there is no forcing, homogeneous Dirichlet boundary conditions are enforced on the walls and at the top of the box $u=\langle 1,0,0\rangle$ represents the moving lid.  The viscosity will be chosen as the inverse of the Reynolds number, and we will use $Re$=200 and 1000.  The tests will use $u_0=0$ for the initial condition for the iterations.

\begin{figure}[H]
			\centering
			\includegraphics[width = 0.95\textwidth, height=.27\textwidth,viewport=100 0 1100 300, clip]{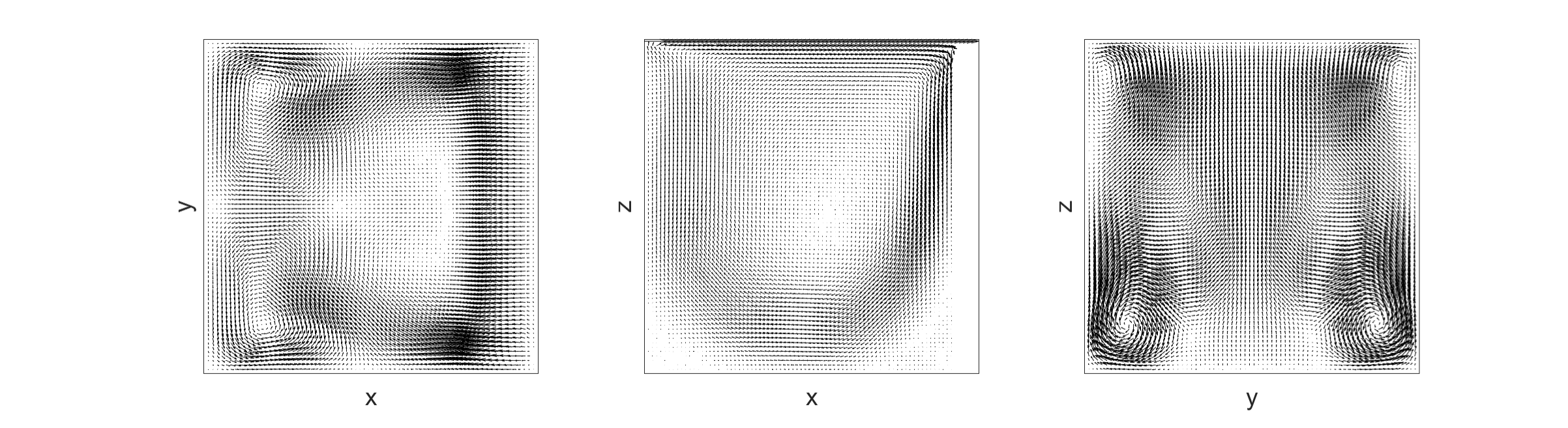}
			\caption{Shown above are the midsliceplane plots of solutions for the 3D driven cavity simulations at $Re$=1000.\label{fig:midsliceplanes}}
\end{figure}

The mesh is constructed using Chebychev points on [0,1] to first construct a $\mathcal{M}\times \mathcal{M}\times \mathcal{M}$ grid of rectangular boxes (we use $\mathcal{M}$=11 for $Re$=200 and $\mathcal{M}$=13 for $Re$=1000).  Each box is then split into 6 tetrahedra in the manner of figure \ref{tet}, and then each of these tetrahedra is split into 4 tetrahedra with a barycenter refinement (Alfeld split).  This mesh is equipped with 
$(P_3, P_2^{disc})$ Scott-Vogelius elements, and this provides for approximately 796K total dof for the $Re$=200 tests and 1.3 million total dof for the $Re$=1000 tests.  We note this velocity-pressure pair is known to be LBB stable on such a mesh construction from \cite{Z05_Alfed}.   Solution plots found with this discretization matched those from the literature \cite{WongBaker2002}, and we show midspliceplanes of the $Re$=1000 solution in figure \ref{fig:midsliceplanes}.

\begin{figure}[ht]
\center
\includegraphics[width = .32\textwidth, height=.3\textwidth,viewport=0 0 550 415, clip]{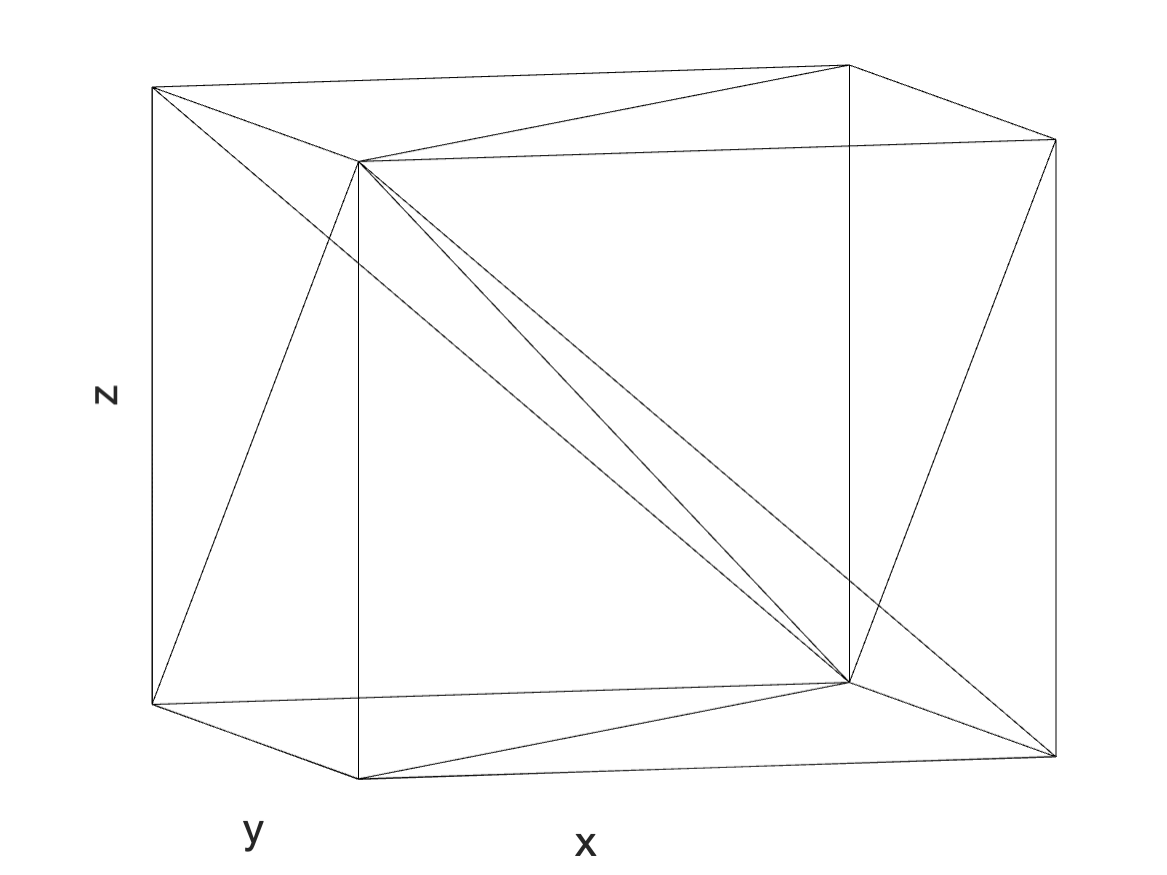}
\caption{\label{tet} Shown above is the method used to split a rectangular box into 6 tetrahedra.}
\end{figure}

The measurement data is constructed similar to the 2D case above.  Measurement points are taken to be the tetrahedra vertices closest to the centers of a uniform grid of  $N\times N\times N$ cubes.  Measurement data is taken to be the true solution (found using AA-Picard method of \cite{PRX19}) with noise added at each component from random numbers with normal distribution having mean 0 and standard deviation snr.  We again take snr to be 0.001, 0.01 and 0.05 for these tests.  These tests used $N$=4 (64 total measurement points) for $Re$=200 and $N$=9 (729 total measurement points) for $Re$=1000; for the latter case, $N$=8 was not sufficiently large to yield convergence (recall the theory above requires $H$ sufficiently small).  The nudging parameter $\mu$=1 was used, and we note that results from larger nudging parameters gave very similar results.

For $Re$=200, convergence of the $L^2$ residuals and $L^2$ errors for snr=0.001, 0.01 and 0.05 are shown in figure \ref{convplots4}, and for $Re$=1000 they are shown in figure \ref{convplots5}.  We observe approximately linear convergence of the $L^2$ residual down to the set tolerance of $10^{-8}$ in all cases, and also in all cases we observe the $L^2$ error to converge only to the level of the snr.  Note that the $Re$=1000 tests converge faster than the case of $Re=200$ but this is likely because many more measurement points were used (729 compared to 64).

\begin{figure}[ht]
\center
\includegraphics[width = .32\textwidth, height=.3\textwidth,viewport=0 0 550 415, clip]{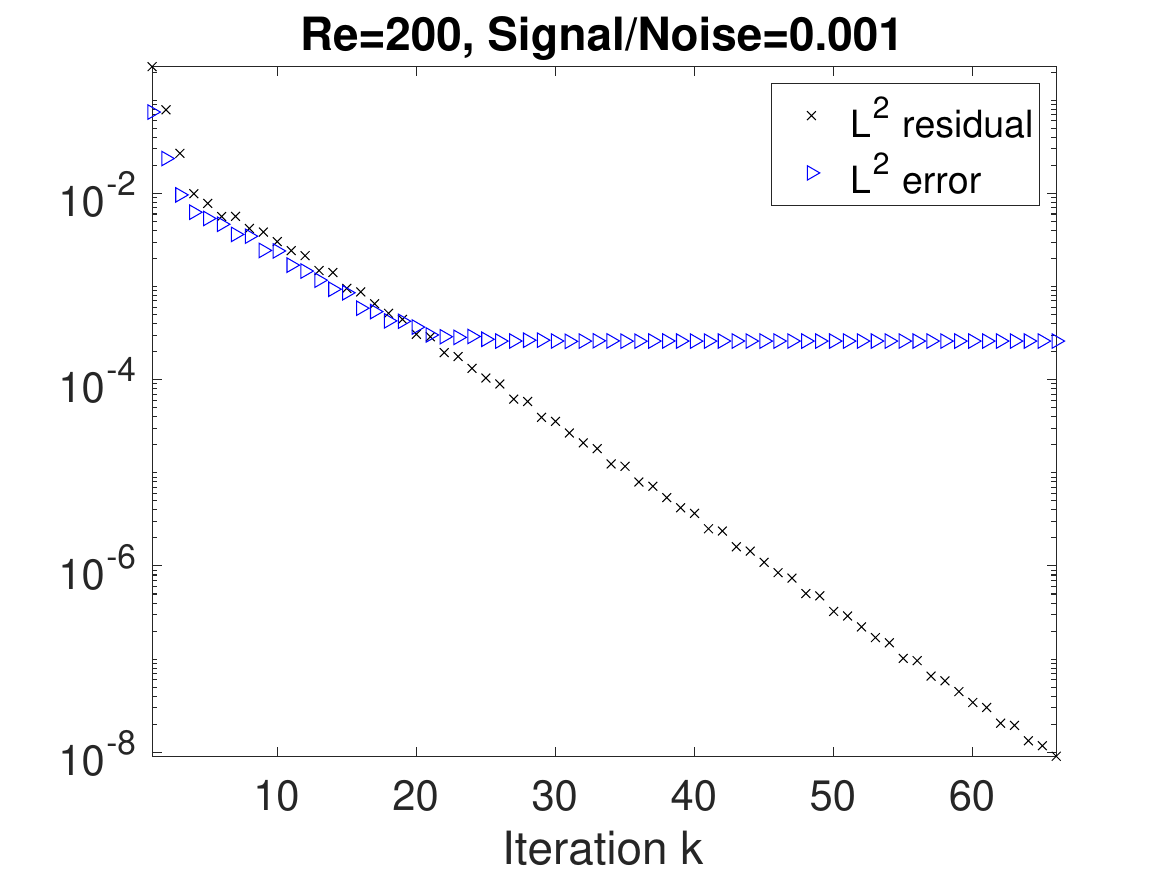}
\includegraphics[width = .32\textwidth, height=.3\textwidth,viewport=0 0 550 415, clip]{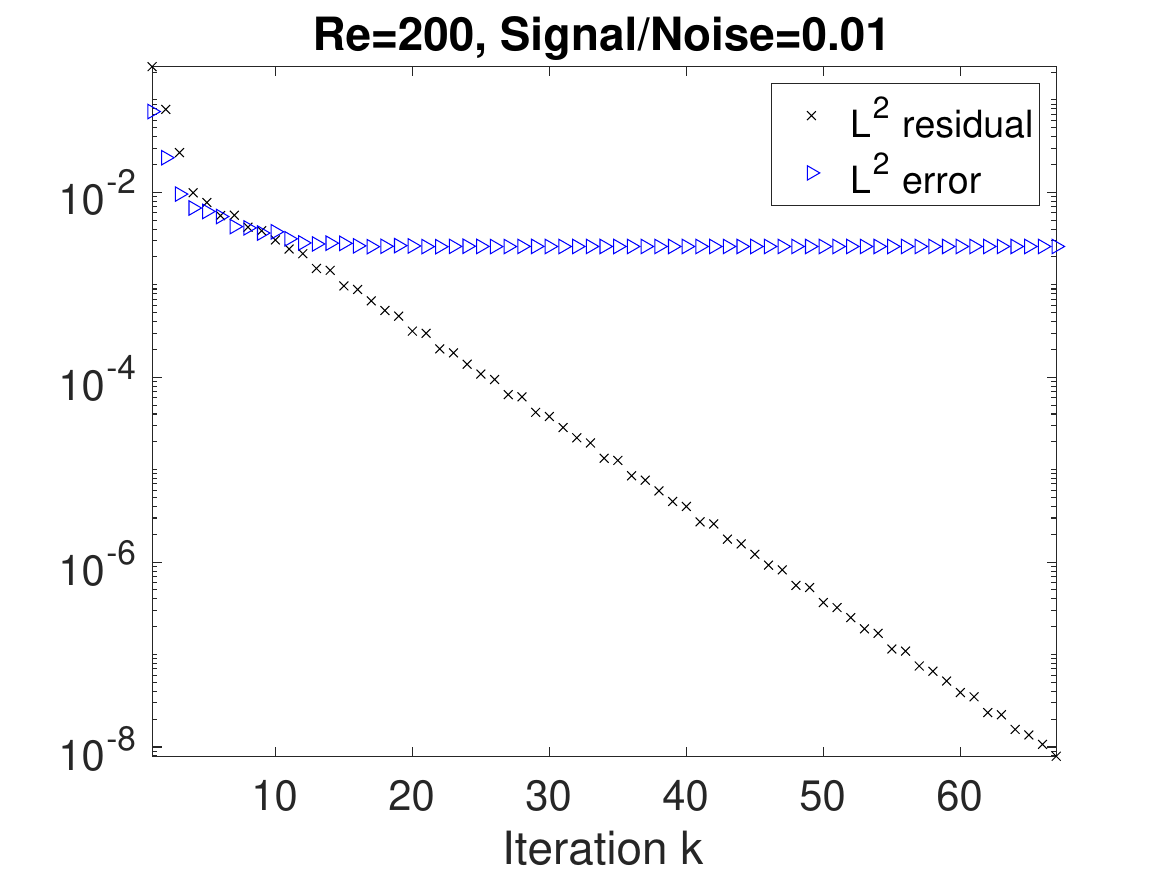}
\includegraphics[width = .32\textwidth, height=.3\textwidth,viewport=0 0 550 415, clip]{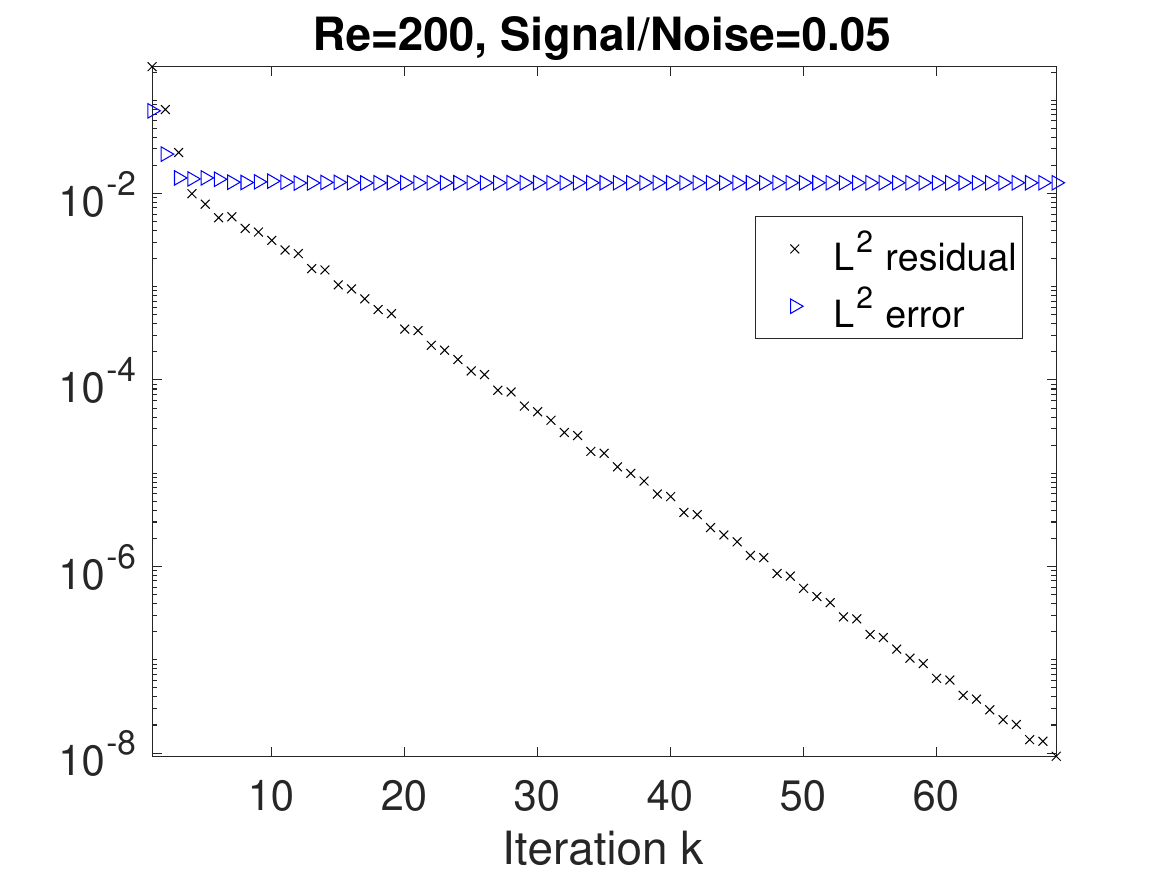}
\caption{\label{convplots4} Shown above are the error and residual plots for $Re$=200 3D driven cavity tests with varying snr.}
\end{figure}

\begin{figure}[ht]
\center
\includegraphics[width = .32\textwidth, height=.3\textwidth,viewport=0 0 550 415, clip]{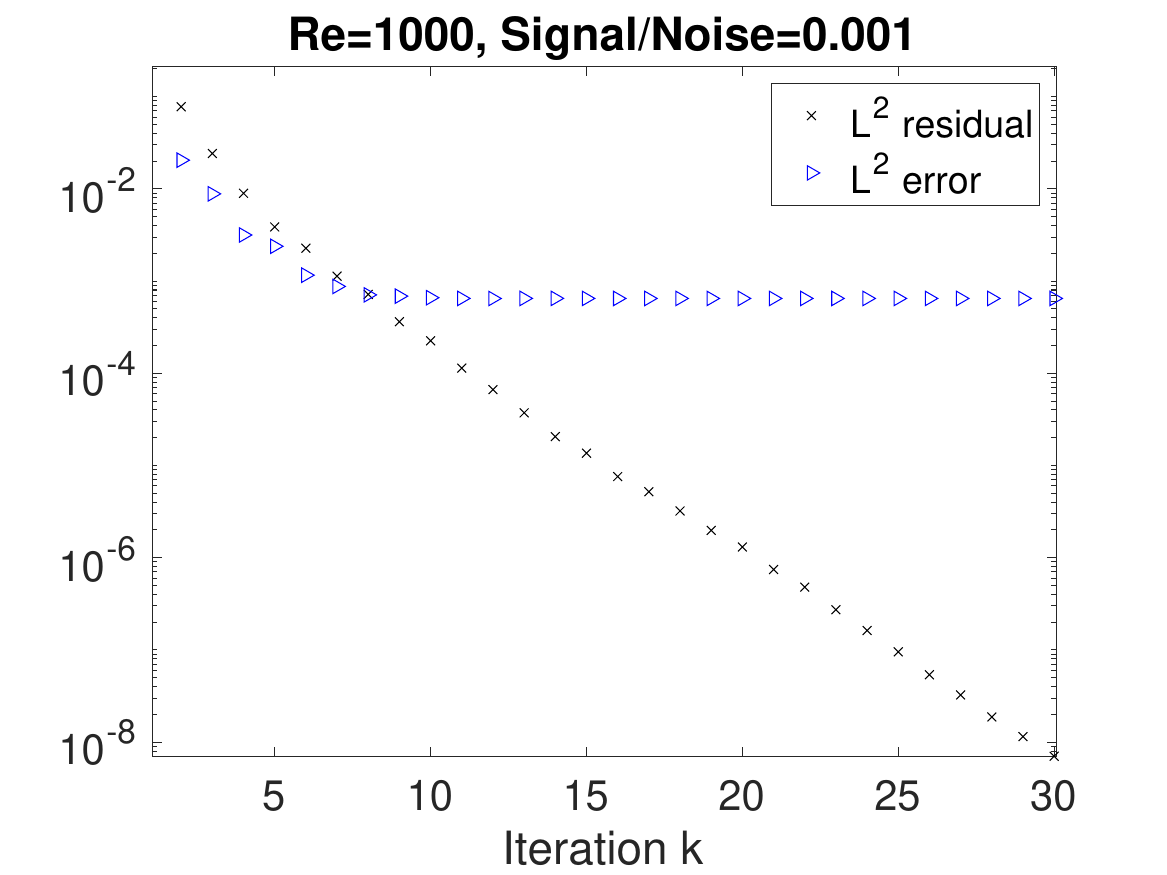}
\includegraphics[width = .32\textwidth, height=.3\textwidth,viewport=0 0 550 415, clip]{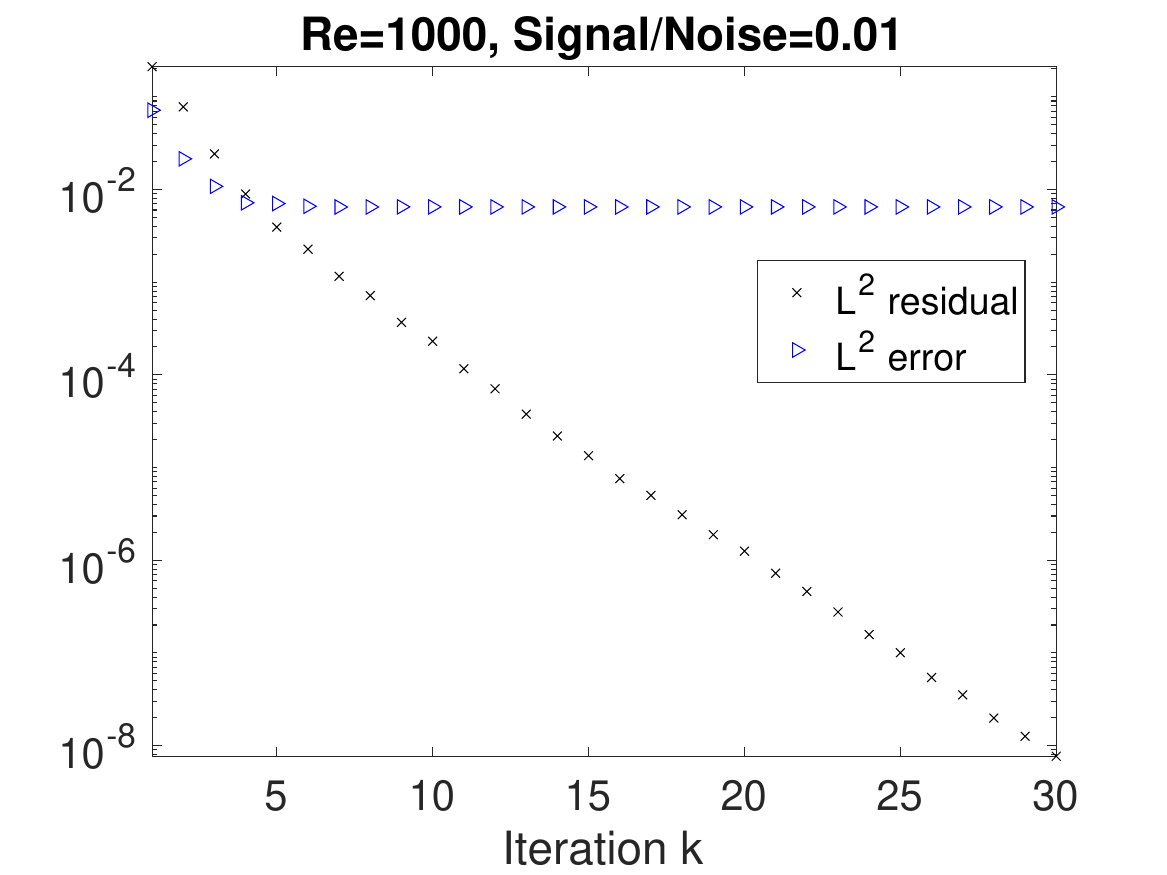}
\includegraphics[width = .32\textwidth, height=.3\textwidth,viewport=0 0 550 415, clip]{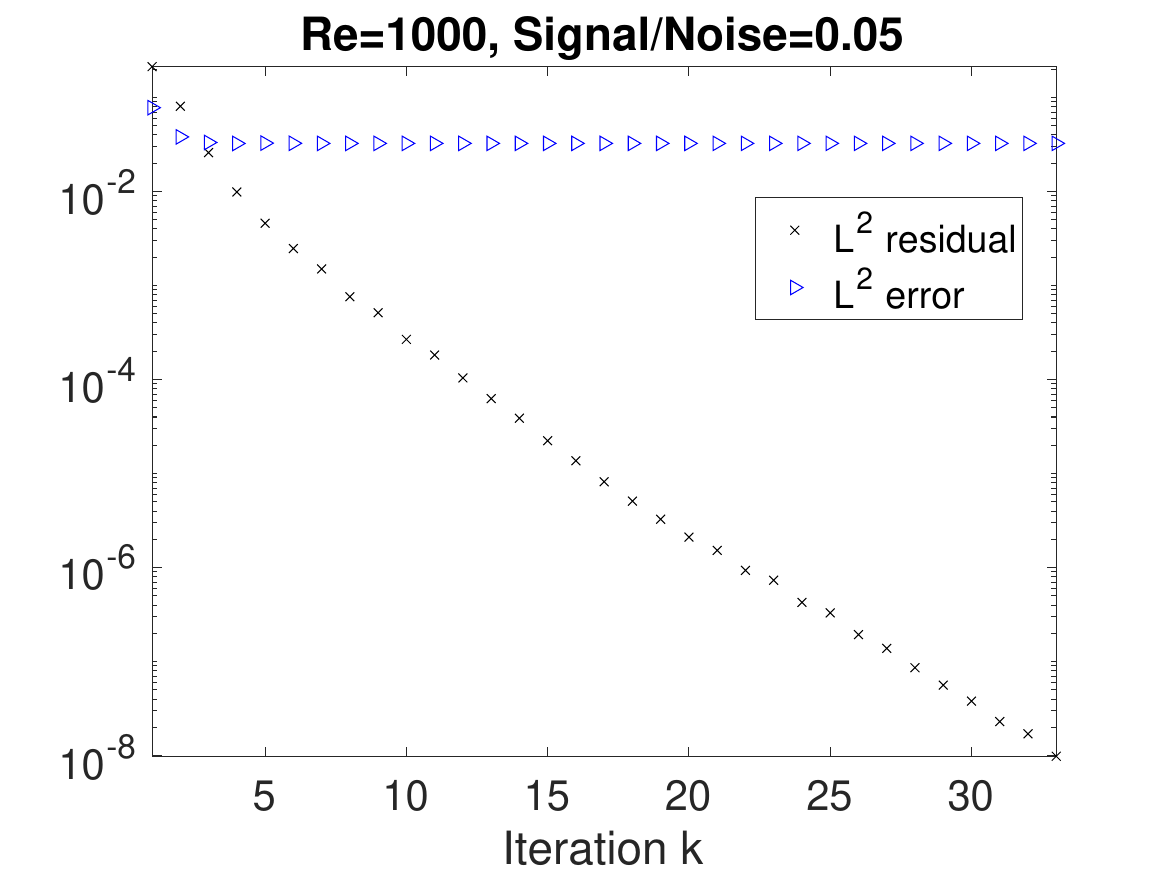}
\caption{\label{convplots5} Shown above are the error and residual plots for $Re$=1000 3D driven cavity tests with varying snr.}
\end{figure}

\subsection{Newton with CDA-Picard generated initial guess}

For our last test, we consider using CDA-Picard with noisy data for the purpose of generating an initial guess for the (usual) Newton iteration.  
From the tests and analysis above, we observe that CDA-Picard's $L^2$ residual will converge (provided enough measurement data), however the $L^2$ error will only converge up to approximately the level of the signal to noise ratio.   Also observed from the tests above, the $L^2$ residual and error are close to each other in value, until the $L^2$ error bottoms out near the snr while the $L^2$ residual continues to converge linearly.  
While noisy data will create a lower bound on the error for CDA-Picard iterates that is on the order of the signal to noise ratio, these iterates may be sufficiently close to the root that it can allow for Newton to converge if they are used as initial guesses.  Indeed, we find below that this strategy can be quite effective.

We now test this idea for the 2D driven cavity at $Re$=10000 and 3D driven cavity with $Re$=1000 problems tested above, with the same discretizations and parameters.  We note that both Newton and Picard iterations fail for these problems if an initial guess of $u_0=0$ is used.  We run CDA-Picard with noisy data (and varying snr) until the $L^2$ residual falls below $10^{-2}$, and then use the final iterate as the initial guess for the Newton iteration.  We refer to this method as CDA-Picard + Newton below.  We note that we also tested a $10^{-3}$ tolerance and got the same results, but $10^{-1}$ was not sufficiently accurate for Newton to converge.  We also  note that $\mu=1$ was the nudging parameter, but results using $\mu=10000$ gave very similar results.

Convergence results are shown in figure \ref{convplotsN} as $L^2$ error vs iteration count for CDA-Picard + Newton for varying snr, usual Newton and usual Picard.  The usual Newton, usual Picard, and CDA-Picard all use $u_0=0$ as the initial guess.  We observe that usual Picard fails to converge for both tests, although it remains stable (as expected \cite{GR86}).  The usual Newton iteration also fails and the iterates get very large; in the 2D case direct solvers are used and the iteration is able to continue but in 3D the linear solvers fail after the size of the iterate grows above $10^4$ (we use method of \cite{benzi,HR13}).  For CDA-Picard + Newton, however, convergence is achieved provided the snr is not too large.  For the 2D problem, convergence is achieved with snr=0.001 and 0.01, but the method fails to converge with snr=0.05.  For the 3D problem, convergence is achieved for snr=0.05 and 0.1 but not for 0.2.  Hence as expected, in both cases there is a level of noise which is too high to provide a good enough initial guess for Newton to converge.  Overall, these results show CDA-Picard + Newton is quite effective, provided the noise level is not too large.

\begin{figure}[ht]
\center
2D Re=10000 \hspace{1.7in} 3D Re=1000\\
\includegraphics[width = .4\textwidth, height=.33\textwidth,viewport=0 0 550 390, clip]{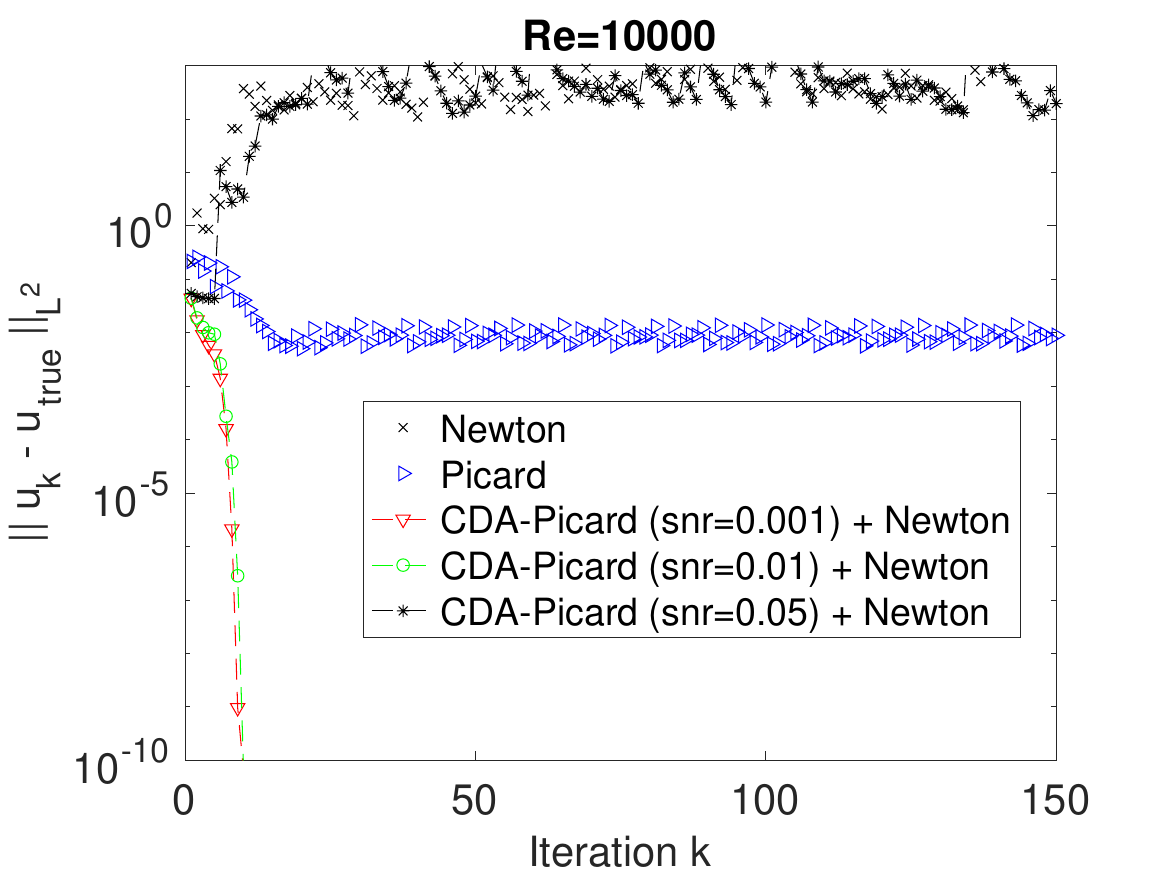}
\includegraphics[width = .4\textwidth, height=.33\textwidth,viewport=0 0 550 390, clip]{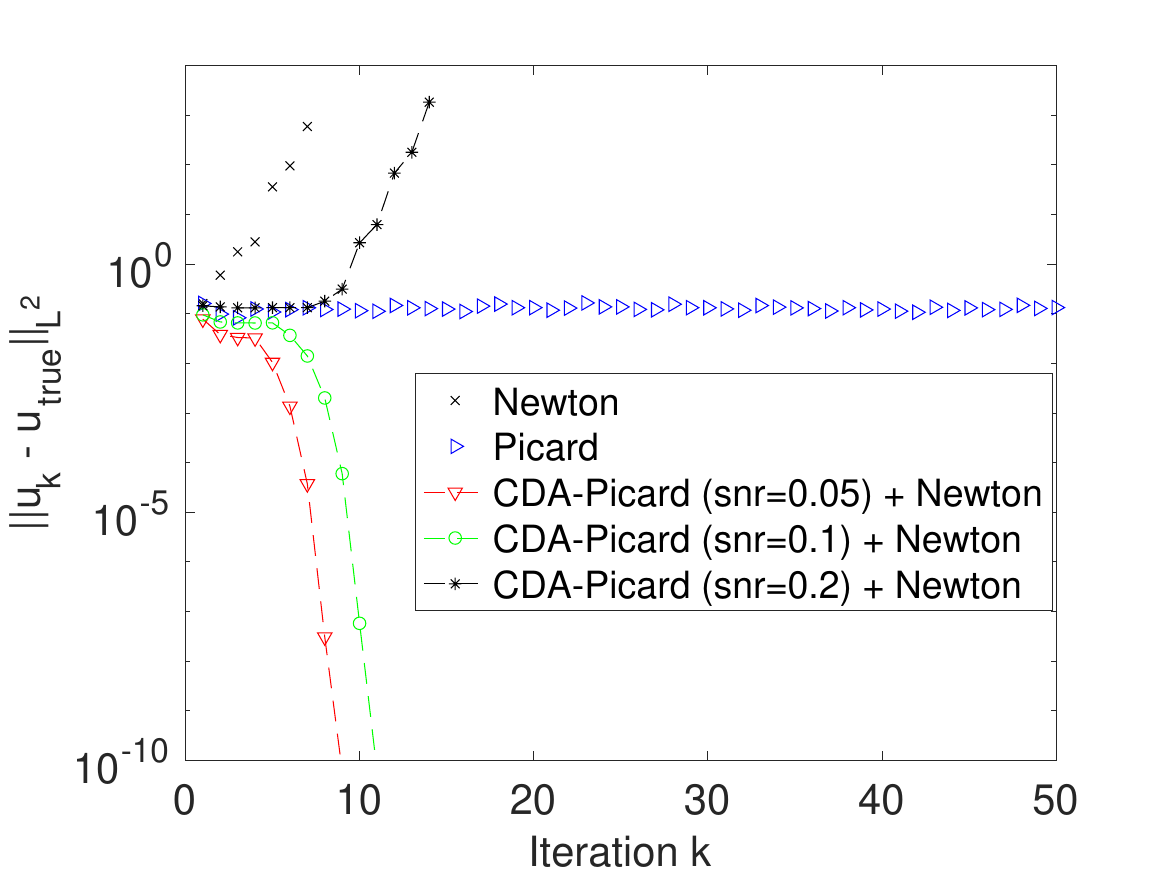}
\caption{\label{convplotsN} Shown above are $L^2$ error vs iteration number, for the 2D driven cavity at  $Re$=10000 (left) and the 3D driven cavity at $Re$=1000 (right) for Newton, Picard, and CDA-Picard + Newton.}
\end{figure}

\section{Conclusions and future directions}

We have extended the CDA-Picard methodology in this paper.  We have improved the analysis by providing convergence results in more appropriate norms than the original analysis provides, and also extended the methodology and analysis to the more realistic case of noisy data.  Our analytical and numerical results show that the $L^2$ error scales with the relative size of the noise.  For large relative noise levels, however, we showed that using the CDA-Picard iteration to generate initial guesses for the usual Newton iteration can be very effective.

For future work, we plan to consider applications of CDA-Picard to multiphysics problems related to the NSE.  Additionally, we plan to look for methods to reduce the amount of data required by the method, in particular in the 3D case.

\section{Data availability}

Data will be made available on request.

\section{Declaration of competing interest}

The authors declare the following financial interests/personal relationships which may be considered as potential competing interests: Leo Rebholz reports financial support was provided by National Science Foundation.  Bosco Garc{\' i}a-Archilla and Julia Novo report financial support from MCIN/AEI of Spain and the European Union.


\section{Appendix}

\begin{proof}[Proof of Theorem \ref{rescon1}]
Subtracting the iteration at Step $k+1$ from Step $k$, and testing with $u_{k+1} - u_k$ gives
\begin{equation}\label{step1}
\nu \| \nabla (u_{k+1} - u_k ) \|^2 + \mu \| I_H (u_{k+1} - u_k) \|^2 \le -b(u_k - u_{k-1},u_k,u_{k+1}-u_k),
\end{equation}
thanks to $b(u_{k-1},u_{k+1} - u_k,u_{k+1}-u_k)=0$.  Using \eqref{bbound2} and Young's inequality, we obtain
\[
\nu \| \nabla (u_{k+1} - u_k ) \|^2 + \mu \| I_H (u_{k+1} - u_k) \|^2 \le M \| \nabla( u_k - u_{k-1}) \| \| \nabla u_k \| \| \nabla ( u_{k+1}-u_k) \|.
\]
Using Lemma \ref{lem2} (for sufficiently large $k$) on the first right hand side term and dropping the second (positive) left hand side term, we get the stated bound 
\[
\| \nabla (u_{k+1} - u_k ) \|  \le  C_{\delta}  \alpha   \| \nabla( u_k - u_{k-1}) \|.
\]
\end{proof}

\begin{proof}[Proof of Theorem \ref{rescon2}]
Starting from \eqref{step1} and using \eqref{bbound1} and Young's inequality, we obtain
\[
\nu \| \nabla (u_{k+1} - u_k ) \|^2 + \mu \| I_H (u_{k+1} - u_k ) \|^2 \le M \| \nabla( u_k - u_{k-1}) \|^{1/2} \| u_k - u_{k-1} \|^{1/2} \| \nabla u_k \| \| \nabla ( u_{k+1}-u_k) \|.
\]
Proceeding as in Lemma \ref{lem1} to lower bound the left hand side and using Lemma \ref{lem2} (for sufficiently large $k$) on the first right hand side term along with Young's inequality gives
\[
\frac{\nu}{4} \| \nabla (u_{k+1} - u_k ) \|^2 + \hat \lambda \| u_{k+1} - u_k \|^2 \le {C_{\delta}^2 \nu \alpha^2} \| \nabla( u_k - u_{k-1}) \| \| u_k - u_{k-1} \|.
\]
Applying Young's inequality again to the right hand side gives
\begin{align*}
\frac{\nu}{4} \| \nabla (u_{k+1} - u_k ) \|^2 + \hat \lambda \| u_{k+1} - u_k \|^2 
& \le \rho \frac{\nu}{4} \| \nabla( u_k - u_{k-1}) \|^2 +  \frac{C_{\delta}^4 \nu \alpha^4}{ \rho} \| u_k - u_{k-1} \|^2 \\
& \le \rho \left( \frac{\nu}{4} \| \nabla( u_k - u_{k-1}) \|^2 +  \frac{C_{\delta}^4 \nu \alpha^4}{ \rho^2 } \| u_k - u_{k-1} \|^2 \right).
\end{align*}
Using the stated choice of $\rho$ and assumption that it is less than 1, we obtain the stated result,
\begin{align*}
\frac{\nu}{4} \| \nabla (u_{k+1} - u_k ) \|^2 + \hat \lambda \| u_{k+1} - u_k \|^2 
\le
2C_I C_{\delta}^2 \alpha^2 H \left( \frac{\nu}{4} \| \nabla( u_k - u_{k-1}) \|^2 +  \hat \lambda \| u_k - u_{k-1} \|^2 \right).
\end{align*}

\end{proof}

%
	

\end{document}